  \numberwithin{equation}{section}
 \title{Three representations of the fractional $p$-Laplacian: \\[3pt] semigroup, extension and Balakrishnan formulas}
 \author{Félix del Teso$^1$, David Gómez-Castro$^2$, Juan Luis Vázquez$^3$}
\date{}
 \newcommand{\defeq}{\vcentcolon=}
 \DeclareMathOperator{\N}{{\mathbb N}}
 \newcommand{\Rd}{{\mathbb R^n}}
 \newcommand{\fpl}{{(-\Delta)_p^s}}
 \DeclareMathOperator{\dist}{dist}
 \DeclareMathOperator{\PV}{PV}
 \newcommand*\diff{\mathop{}\!\mathrm{d}}
 \newcommand{\vertiii}[1]{{\left\vert\kern-0.25ex\left\vert\kern-0.25ex\left\vert #1
 		\right\vert\kern-0.25ex\right\vert\kern-0.25ex\right\vert}}
 	\newcommand{\R}{\mathbb{R}}
 	\newcommand{\Z}{\mathbb{Z}}
 	\newcommand{\veps}{\varepsilon}
 	\newcommand{\dd}[1]{\diff{#1}}
 \newtheorem{theorem}{Theorem}[section]
 \newtheorem{lemma}[theorem]{Lemma}%
 \theoremstyle{definition}
 \newtheorem{remark}[theorem]{Remark}%
\begin{document}

\maketitle

 \begin{abstract}
We introduce three representation formulas for the fractional $p$-Laplace operator in the whole range of parameters $0<s<1$ and $1<p<\infty$.  Note that for $p\ne 2$ this a nonlinear operator.  The first representation is based on a splitting procedure that combines a renormalized nonlinearity with the linear heat semigroup. The second adapts the nonlinearity to the Caffarelli-Silvestre linear extension technique. The third one is the corresponding nonlinear version of the Balakrishnan formula. We also discuss the correct choice of the constant of the fractional $p$-Laplace operator in order to have continuous dependence as $p\to 2$ and $s \to 0^+, 1^-$.

A number of consequences and proposals are derived. Thus, we propose a natural spectral-type operator in domains, different from the standard restriction of the fractional $p$-Laplace operator acting on the whole space. We also propose  numerical schemes, a new definition of the fractional  $p$-Laplacian on manifolds, as well as alternative characterizations of the $W^{s,p}(\R^n)$ seminorms.

 \medskip

\noindent {\it MSC 2010\/}: Primary 35J60 %
                  Secondary 35J92,  %
                  35R11.   	%

 \medskip

\noindent {\it Key Words and Phrases}: Fractional $p$-Laplacian, Bochner's subordination, semigroup formula,  extension problem, Balakrishnan's formula, spectral formulation.

 \end{abstract}

	\section{Introduction}\label{sec:intro}

In  recent years, nonlocal differential operators and equations have attracted an increasing amount of attention. In particular, the fractional Laplacian is defined for suitable functions $u(x)$, $x\in \Rd$, as
\begin{equation}\label{eq.frlap}
	(-\Delta)^s u(x) = c_{n,s}  \PV \int_{\mathbb R^d} \frac{u(x) - u(y)}{|x-y|^{n+2s}} \diff y,
\end{equation}
where $s\in(0,1)$, $c_{n,s}>0$ is a normalization constant and $\PV$ denotes that the integral is taken in the principal value sense, cf. the classical references \cite{Landkof66, Stein1970}.
It appears (but for the constant factor) as the subdifferential of the fractional energy
\begin{equation}\label{Js2}
	{\mathcal J}_{2,s}(u)= \int_{{\mathbb R^d}}\int_{{\mathbb R^d} } \frac{|u(x)-u(y)|^2}{|x-y|^{n+2s}}\,dxdy\,,
\end{equation}
that is related to the definition of the standard norm of the space $W^{s,2}(\Rd)$.
Besides, there are several known representation formulas that have proven to be quite convenient in tackling problems involving fractional operators like \eqref{eq.frlap}, see the list in \cite{Kwa17} where a rather thorough discussion is made. Three of those representations are quite relevant and will be re-examined here in a nonlinear context. They are the so called semigroup (or Bochner's subordination) formula; the Caffarelli-Silvestre extension problem, that allows to write the nonlocal operator as in terms of a local problem in one extra dimension; and Balakrishnan's formula.

\noindent \ (i)	A first representation was given by S. Bochner (see \cite{Bochner1949}). In modern form, it says that
for a smooth enough function $u$ the following formula holds
\begin{equation}\label{eq:BochnerFor}
	(-\Delta)^s u(x)= \frac{1}{\Gamma(-s)} \int_{0}^\infty (e^{t \Delta}[u](x)-u(x)) \frac{\dd t}{t^{1+s}},
\end{equation}
where $e^{t \Delta}$ denotes the heat semigroup, i.\,e., we take $e^{t \Delta}[u](x):=w(x,t)$, where $w$ is the solution of the problem
\begin{equation*}
	\begin{dcases}
		\partial_t w(x,t)- \Delta w(x,t)=0, & x \in \Rd , t > 0 , \\
		w (x,0) = u(x), & x \in \Rd .
	\end{dcases}
\end{equation*}
\noindent \ (ii)		On the other hand, the fractional Laplacian can be written in local terms since L. Caffarelli and L. Silvestre showed in \cite{Caffarelli2007} that the extension problem
\begin{equation*}
	\begin{dcases}
		\Delta_x v + \frac{1-2s}{y} v_y + v_{yy} = 0 & x \in \Rd , y > 0 , \\
		v (x,0) = u(x) & x \in \Rd \,,
	\end{dcases}
\end{equation*}
allows to compute the fractional Laplacian $(-\Delta)^s u$ as a weighted Neumann limit of $v$
\begin{equation}
	\label{eq:extension}
	(-\Delta)^s u (x) = \frac{4^s \Gamma(s)}{\Gamma(-s)} \lim_{y \to 0} \frac{v(x,y) - v(x,0)}{y^{2s}}.
\end{equation}

\noindent \ (iii) A further representation for the fractional Laplacian was given by A. V. Balakrishnan whose formula  reads (see \cite{Balakrishnan1960})
\begin{equation}
	\label{eq:Balakrishnan original}
	(-\Delta)^s u(x)= \frac{\sin(\pi s)}{\pi} \int_0^\infty (-\Delta)(t-\Delta)^{-1} [u](x) \frac{\dd t}{t^{1-s}}.
\end{equation}

The three representations have proved to be very useful both in theory and applications, and they have given rise to a wide literature.

\medskip

\noindent {\bf Aim and main results.}	The aim of this paper is to present analogous representations in the nonlinear context of the fractional $p$-Laplacian, and we comment on their usefulness in different applications. We first recall that the fractional $p$-Laplacian is usually defined as follows
\begin{equation}\label{eq.spl1}
	\fpl u(x) =C_1  \PV \int_{\mathbb R^d} \frac{\Phi_p (u(x) - u(y))}{|x-y|^{n+sp}} \diff y,
\end{equation}
where $p\in (1,\infty)$ and $s\in(0,1)$, we write $\Phi_p (t) = |t|^{p-2}t$, and $C_1=C_1(n,s,p)>0$
is a constant. The fractional $p$-Laplacian is the subdifferential of a convex functional called the $(s,p)$-energy (or Gagliardo seminorm)
\begin{equation}\label{Jsp1}
	{\mathcal J}_{p,s}(u)= \frac{ C_1}{p}\int_{{\mathbb R^d}}\int_{{\mathbb R^d} } \frac{|u(x)-u(y)|^p}{|x-y|^{n+sp}}\,dxdy\,,
\end{equation}
defined for those functions in $L^2(\Rd)$  with finite $(s,p)$-energy, see e.g. \cite{Mazon2016, Vazquez2020}. We also recall that this operator has a dense domain in $L^2(\Rd)$ containing smooth enough functions. Actually, $\fpl u(x)$  is well-defined in the  pointwise sense for smooth functions $C_c^\infty (\Rd)$ (assuming also that the gradient does not vanish when $p$ lies in the range $p\in(1,\frac{2}{2-s}$) (see, e.g., Section 3 in \cite{Korvenpaa2019}).

Our main result is the introduction of three equivalent representation formulas for $\fpl$, which retain the flavour of the linear case. First, we show that 	the fractional $p$-Laplacian can be represented by the formula
\begin{equation}\label{eq:semigintro}
	\fpl u(x) = 	C_2
	\int_0^{+\infty} e^{t \Delta} [\Phi_p (  u(x) - u(\cdot) )](x) \frac{\diff t}{t^{1+\frac {sp} 2}}\,,
\end{equation}
where $C_2 > 0$.
The precise study of this nonlinear representation occupies \Cref{sec:semigrep},  see \Cref{thm:semigroup}.

Second, we consider an extension operator $E_{s,p}$ related to a local problem in $ \R^n \times (0,\infty)$, and such that the following representation holds:
\begin{equation}\label{eq:extintro}
	\fpl u(x) =  C_3\,   \lim_{y \to 0} \frac{ E_{s,p}\Big [\Phi_p \big (u(x)-u(\cdot) \big ) \Big ](x,y)  }{y^{sp}},
\end{equation}
where $C_3 > 0$.
The precise study of this nonlinear representation occupies \Cref{sec:extension}, see \Cref{thm:extension}.

Lastly, the fractional $p$-Laplacian is also equivalently given by the following nonlinear Balakrishnan's formula
\begin{equation}\label{eq:Balaintro}
	\fpl u(x) =  C_4 \int_0^\infty \Delta (t-\Delta)^{-1} [\Phi_p (  u(x) - u(\cdot) )](x) \frac{\diff t}{t^{1-\frac {sp} 2}},
\end{equation}
where $C_4 > 0$. 	This will be studied in \Cref{sec:Balak}, see \Cref{thm:Balak}.

The choice of the  constant $C_1$ is indifferent in most cases and can be fixed as 1, but a precise value $C_1(n,s,p)$ can be important in writing some exact formulas or in the limit cases $s\to 0^+$, $s\to 1^-$, or $p\to 2$.  In \Cref{sec:constants} we will discuss the choice of this constant and also give an explicit relation with $C_2$, $C_3$, and $C_4$, which turn out to be dimensionally free after a suitable choice of $C_1$.

The three above representations succeed in replicating the linear formulas in the nonlinear setting, which was unexpected. One may wonder about the complexity introduced by the nonlinear term $\Phi_p(u(x) - u(y))$. In that sense, we have the following comments.
Firstly, we note that the operator used in each case $(e^{t\Delta}$, $E_{s,p}$ and $\Delta (t-\Delta)^{-1}$) is independent of the point $x$ where  $\fpl u(x)$ is computed.
As a consequence of that, our representation formulas  need to solve a single problem with an initial/boundary condition depending on a parameter, i.e.  $f_\lambda(y)=\Phi_p(\lambda - u(y))$ where $\lambda =u(x)$. This fact is very clearly seen in the numerical application, since formula \eqref{eq:discFPL2} does not add any extra complexity with respect its linear counterpart in the computation of the coefficients. Another possible interpretation of our representations is that given a function $u:\R^n\to \R$  we build an initial/boundary condition $F(x,z):=\Phi_p(u(x)-u(z))$ and solve a problem in $\R^{2n+1}_+$ where the operator only acts in $n+1$ of the variables. We continue next with some further consequences.

\noindent{\bf Representation in bounded domains.}	As a first consequence of the above results, we remark that the first representation of the fractional $p$-Laplacian  naturally suggests a version of the operator defined in bounded domains $\Omega$ of $\R^n$. Given $u:\Omega\to \R$ we define a new operator
\begin{equation}\label{eq:semigdomain}
	(-\Delta_\Omega)_{p}^s u (x) :=C_2\int_0^{+\infty} e^{t \Delta_\Omega} [\Phi_p (  u(x) - u(\cdot) )](x) \frac{\diff t}{t^{1+\frac {sp} 2}},
\end{equation}
where $e^{t \Delta_\Omega} $ denotes the heat semigroup in $\Omega$ with Dirichlet boundary conditions, i.e. $w(x,t):=e^{t \Delta_\Omega}[f](x)$ is the solution of
\begin{equation*}
	\begin{dcases}
		w_t(x,t)- \Delta w(x,t)=0 & x\in\Omega, t > 0, \\
		w(x,t) = 0 & x \in \partial \Omega, t > 0 , \\
		w(x,0) = f (x) & x \in \Omega.
	\end{dcases}
\end{equation*}
This operator is defined for any $p>1$ and generalises the Spectral Fractional Laplacian, which is the well-known case $p = 2$.
In \Cref{sec:new spectral} we will show this operator is well-defined and different from the restriction of $\fpl$. Furthermore, we suggest a generalisation for manifolds that we do not develop in full detail. Analogous variants can be constructed via \eqref{eq:extintro} and \eqref{eq:Balaintro}.

\medskip

\noindent{\bf Further extensions and comments.}
\Cref{sec:numApp} addresses the topic of numerical analysis for the fractional $p$-Laplacian starting from our representation formulas.   \Cref{sec:app} is devoted to comment on some extensions and applications of our results. In the \Cref{sec:commentandproblems}, we present some comments and open problems that we find interesting as well as a brief intuition about our results. Finally, we include an appendix with technical results.

\medskip

\noindent{\bf Comments on  related literature.}
Some references for stationary boundary value problems involving the fractional $p$-Laplace operator that we study here:	
Ishii and Nakamura \cite{Ishii10} treat the first boundary value problem;  Lindgren and Lindqvist \cite{LindgrenLindq14, Lindgren17} establish the fractional eigenvalues, Perron's Method and Wiener's theorem. Regularity was studied in \cite{Brasco17,BrLiSc18,brasco2019,Iannizzotto16,Kuusi15,Lindgren16} among others. The equivalence of weak and viscosity solutions in boundary value problems involving the fractional $p$-Laplacian  has been recently treated in \cite{Korvenpaa2019} in the homogeneous case and \cite{barrios2020} in the non-homogeneous case. The parabolic problem has been studied in bounded domains: Mazón, Rossi, and Toledo \cite{Mazon2016}, Puhst \cite{Puhst15}, and  Vázquez \cite{Vazquez2016}; and also in the whole space $\R^n$ by M. Strömqvist \cite{Strom19} and Vázquez \cite{Vazquez2020}.

We will make specific comments on the semigroup formula, the extension problem and the Balakrishnan's formula in the corresponding sections. Other applications, in particular numerical studies are mentioned in \Cref{sec:numApp} and  \Cref{sec:app}.

The literature contains works on other types of operators that are also known as  ``nonlocal or fractional $p$-Laplacian  operators''.  We are not treating such versions in our paper since they lead to quite different objects, tools and results. For instance, in the book by  Andreu-Vaillo, Mazón, Rossi, Toledo, \cite{Andreu2010}, the integro-differential operator is defined on the basis of integrable kernels.  Cipriani and Grillo proposed in \cite{CiGr09} another type of nonlinear and nonlocal operator defined via the subordination formula applied directly to a nonlinear semigroup. We show in the last section that this method leads to an operator that is different from $\fpl$ treated here. On the other hand, Bjorland-Caffarelli-Figalli introduce in  \cite{BCF12a, BCF12b} a kind of normalized  (game theoretical)  fractional infinity Laplacian as well as a fractional $p$-Laplace operator in non-divergence form (see \cite{Manfredi10,Manfedi12} for the local version).  They are the nonlocal counterpart of the normalized standard $p$-Laplacian in the viscosity sense.
Finally,  Chasseigne and Jakobsen \cite{ChaJa17} introduced a related non-normalized nonlocal $p$-Laplacian in the viscosity sense.

As a final comment, we recall that  the standard $p$-Laplace operator  ($s=1$) has been widely studied both in the stationary and evolution equations. Let us mention some classical  monographs by  DiBenedetto \cite{DiBenedetto93},  Lindqvist \cite{Lindqvist06}, that contain further references. The monograph \cite[Section 11]{Vazquez2006} contains a summary of the evolution results.

\section{Semigroup representation}\label{sec:semigrep}
We present an interesting representation of  the operator using the heat semigroup.
Let us use the notation $e^{t \Delta}$  for this  semigroup, i.\,e., given $f:\R^n\to \R$ we denote by $w(x,t)=e^{t \Delta}[f](x)$ the solution of
\begin{equation*}
	\begin{dcases}
		\partial_t w(x,t)- \Delta w(x,t)=0, & x \in \Rd , t > 0 , \\
		w (x,0) = f(x), & x \in \Rd .
	\end{dcases}
\end{equation*}
The following is our first main result.
\begin{theorem}
	\label{thm:semigroup}
	Let $s \in (0,1)$ and $p \in (1,+\infty)$. Assume $u\in C^2_b(\Rd)$ and $x_0\in \Rd$. If $p \in (1,\frac{2}{2-s})$ assume additionally that $\nabla u(x_0)\not=0$. Then we have the following representation of the fractional $p$-Laplacian
	\begin{equation}\label{repr1}
		\fpl u(x_0) = 	C_2 (n,s,p)
		\int_0^{+\infty} e^{t \Delta} [\Phi_p (  u(x_0) - u(\cdot) )](x_0) \frac{\diff t}{t^{1+\frac {sp} 2}}, \qquad
	\end{equation}
	where $ C_2 (n,s,p) = \frac {\pi^{ \frac n 2 } C_1(n,s,p)}  {2^{sp} \Gamma(\frac{n+sp}2)} $.
\end{theorem}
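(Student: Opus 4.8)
The plan is to reduce the claimed formula to the definition \eqref{eq.spl1} by interchanging the heat semigroup with the singular integral in $t$, using the pointwise heat kernel representation. Concretely, write $e^{t\Delta}[\Phi_p(u(x_0)-u(\cdot))](x_0) = \int_{\R^n} G_t(x_0-y)\,\Phi_p(u(x_0)-u(y))\,\diff y$, where $G_t$ is the Gaussian kernel $G_t(z) = (4\pi t)^{-n/2} e^{-|z|^2/(4t)}$. Substituting into the right-hand side of \eqref{repr1} and formally applying Fubini, one obtains
\begin{equation*}
	C_2 \int_{\R^n} \Phi_p(u(x_0)-u(y)) \left( \int_0^\infty G_t(x_0-y) \frac{\diff t}{t^{1+\frac{sp}{2}}} \right) \diff y .
\end{equation*}
The inner $t$-integral is an explicit Gamma-function computation: with $r = |x_0-y|$, the substitution $\tau = r^2/(4t)$ gives $\int_0^\infty (4\pi t)^{-n/2} e^{-r^2/(4t)} t^{-1-sp/2}\,\diff t = \frac{\Gamma(\frac{n+sp}{2})}{\pi^{n/2} 2^{sp}}\, r^{-(n+sp)}$, which is exactly $C_1/C_2$ times $|x_0-y|^{-(n+sp)}$ by the stated value of $C_2(n,s,p)$. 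Hence the right-hand side formally equals $C_1 \int_{\R^n} \Phi_p(u(x_0)-u(y))|x_0-y|^{-(n+sp)}\,\diff y$, i.e.\ $\fpl u(x_0)$ once we read the integral in the principal value sense.

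The substance of the proof, then, is justifying these two formal manipulations: the Fubini interchange and the passage to the principal value. The main obstacle is the near-diagonal singularity: for $y$ near $x_0$ the integrand $\Phi_p(u(x_0)-u(y))|x_0-y|^{-(n+sp)}$ behaves like $|u(x_0)-u(y)|^{p-1}|x_0-y|^{-(n+sp)} \sim |x_0-y|^{p-1-(n+sp)}$ (generically, when $\nabla u(x_0)\neq 0$), which is integrable iff $sp < p-1$, i.e.\ $p < \frac{1}{1-s}$; otherwise we genuinely need the principal value and an odd-symmetry cancellation. This is exactly why the hypothesis $u\in C^2_b$ enters, and why the extra assumption $\nabla u(x_0)\neq 0$ is imposed in the range $p\in(1,\frac{2}{2-s})$ (where second-order cancellation is too weak). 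I would handle this by splitting $\int_{\R^n} = \int_{|x_0-y|\le 1} + \int_{|x_0-y|>1}$. On the outer region everything is bounded and Fubini applies by Tonelli (the integrand is absolutely integrable in $(t,y)$), so that piece is immediate. On the inner region I would further split the $t$-integral at, say, $t=1$: for $t\ge 1$ the Gaussian is bounded and one again has absolute integrability; the delicate contribution is $\int_{|x_0-y|\le 1}\int_0^1 G_t(x_0-y)\Phi_p(u(x_0)-u(y))\,t^{-1-sp/2}\,\diff t\,\diff y$.

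For that delicate piece the plan is to symmetrize in $y\mapsto 2x_0-y$ before interchanging integrals: writing $\Phi_p(u(x_0)-u(y))$ as half the sum of itself and its reflection, a second-order Taylor expansion of $u$ at $x_0$ gives $\tfrac12[\Phi_p(u(x_0)-u(y)) + \Phi_p(u(x_0)-u(2x_0-y))] = O(|x_0-y|^{p-1+\min(1,p-1)})$ when $\nabla u(x_0)\neq 0$ (using $C^2$ regularity and the $C^1$, resp.\ Lipschitz, character of $\Phi_p$ for $p\ge 2$, resp.\ $1<p<2$), and a finer expansion when $\nabla u(x_0)=0$ using $D^2u$ — this improved decay is precisely what makes the symmetrized integrand absolutely integrable against the kernel $\int_0^1 G_t\,t^{-1-sp/2}\diff t \lesssim |x_0-y|^{-(n+sp)}$ under the stated hypotheses, after which Fubini is legitimate and the $t$-integral over $(0,\infty)$ is recovered by adding back the harmless $t>1$ part. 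Combining the pieces and recognizing the symmetrized spatial integral as the principal-value definition of $\fpl u(x_0)$ — with the constant bookkeeping from the Gamma computation — completes the argument. I expect the bulk of the technical work to be the uniform decay estimates on the symmetrized nonlinearity across the two ranges $p\ge 2$ and $1<p<2$ and the verification that the integrability threshold matches exactly the condition ``$p\ge\frac{2}{2-s}$ or $\nabla u(x_0)\neq 0$'' appearing in the hypotheses; this is likely relegated to the appendix of technical results mentioned in the introduction.
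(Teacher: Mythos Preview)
Your plan is essentially the paper's: write the semigroup via the Gaussian kernel, perform the substitution $\tau=|x_0-y|^2/(4t)$ in the $t$-integral to produce $\Gamma(\tfrac{n+sp}2)$ and the kernel $|x_0-y|^{-(n+sp)}$, and control the near-diagonal piece by even symmetrization in $y$ plus Taylor expansion of $u$. The paper organizes the justification through a double truncation (replacing $\Phi_p(u(x)-u(\cdot))$ by $g_\varepsilon=\Phi_p(u(x)-u(\cdot))\chi_{|x-\cdot|>\varepsilon}$ in space and $\int_\delta^\infty$ in time), so that Fubini is trivial on the truncated integrals, and then sends $\delta\to0$ and $\varepsilon\to0$; your route aims instead for direct absolute integrability of the symmetrized integrand. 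Both rest on the same appendix estimates (\Cref{lem:tech1,lem:tech2,lem:tech3}).

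One imprecision in your sketch: for $1<p<2$ the H\"older continuity of $\Phi_p$ gives the symmetrized increment only as $O(|z|^{2p-2})$, and this bound does not actually use $\nabla u(x_0)\neq0$. Against $|z|^{-(n+sp)}$ that exponent is integrable precisely when $p>\tfrac{2}{2-s}$, so it does not cover $p\in(1,\tfrac{2}{2-s})$. In that regime the assumption $\nabla u(x_0)\neq0$ enters not through a uniform pointwise improvement (the relevant derivative $|\nabla u(x_0)\cdot z|^{p-2}$ blows up along directions $z\perp\nabla u(x_0)$) but through the angular average: the correct pointwise majorant is $(|\nabla u(x_0)\cdot z|+|z|^2)^{p-2}|z|^2$, and only after integrating over the sphere against a \emph{radial} kernel does this yield the effective $|z|^p$ behaviour of \Cref{lem:tech3}. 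With that refinement your direct-Fubini route can be completed; alternatively, the paper's truncation scheme sidesteps the issue by using \Cref{lem:tech3} merely as an integral bound $|e^{t\Delta}[g_\varepsilon(x_0,\cdot)](x_0)|\lesssim t^{p/2}$ to dominate the $t$-integral and then pass to the limit.
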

Note that, when $p=2$, we have that
\[
e^{t \Delta} [\Phi_p (  u(x) - u(\cdot) )](x)=e^{t \Delta} [  u(x) - u(\cdot) ](x)= u(x)- e^{t \Delta} [u](x).
\]
so that, a suitable choice of $	C_1 (n,s,p)$ allows to recover Bochner's formula \eqref{eq:BochnerFor} from \eqref{repr1}.  Notice that $\Gamma(-s) < 0$ while $C_2 > 0$.

\bigskip

Our proof is based on the scaling properties of the representation formula
\begin{equation*}
	e^{t \Delta} [f] (x) =\frac{1}{(4 \pi t)^{ \frac n 2 }}  \int_{\Rd} {e^{-\frac{|x-y|^2}{4t}}} f(y) \diff y.
\end{equation*}
We use here it in combination with the nonlinearity, plus a desingularization step that is done by subtraction inside the argument of the nonlinearity. In this way we obtain the {\sl splitting representation} \eqref{repr1} of the fractional $p$-Laplacian.

\begin{proof}[Proof of \Cref{thm:semigroup}]
	Let $\veps>0$ and consider the bounded function $g_\veps:\R^n\times \R^n\to 0$ given by $g_\veps(x,y)= \Phi_p ( u(x) - u(y) ) \chi_{|x-y| > \varepsilon}$.
	Fix $x\in \R^n$ and consider the heat semigroup associated to $g_\veps(x,\cdot)$, i.e.
	\begin{equation*}
		e^{t \Delta} [g_\veps(x, \cdot)] (z) = \frac{1}{(4 \pi t)^{ \frac n 2 }}  \int_{\Rd} {e^{-\frac{|z-y|^2}{4t}}} g_\veps(x,y) \diff y.
	\end{equation*}
	Taking $z = x$ we have
	\begin{equation*}
		e^{t \Delta} [g_\veps(x, \cdot)] (x) = \frac{1}{(4 \pi t)^{ \frac n 2 }}  \int_{\Rd} {e^{-\frac{|x-y|^2}{4t}}}g_\veps(x, y) \diff y.
	\end{equation*}
	Let now $\delta>0$ and integrate the quantity above with respect the singular measure $\dd t/t^{1+\frac{sp}{2}}$ in the truncated domain $(\delta, +\infty)$ (we do this to avoid integrability issues around the origin due to the singular behaviour of the measure) to obtain
	\begin{align*}
		\int_\delta^{\infty} e^{t \Delta} [g_\veps(x,\cdot)] (x) \frac{\diff t}{t^{1+\frac {sp} 2}} &= \int_\delta^{\infty}  \frac{1}{(4 \pi t)^{ \frac n 2 }} \left( \int_{\Rd} {e^{-\frac{|x-y|^2}{4t}}} g_\veps(x,y) \diff y\right) \frac{\diff t}{t^{1+\frac {sp} 2}}\\
		&=   \int_{\Rd}  \left( \int_\delta^{\infty}  \frac{1}{(4 \pi t)^{ \frac n 2 }} {e^{-\frac{|x-y|^2}{4t}}}\frac{\diff t}{t^{1+\frac {sp} 2}}\right) g_\veps(x,y) \diff y
	\end{align*}
	where we have used Fubini's theorem to change the order of integration. We use now the change of variables $t = |x-y|^2 /(4\tau)$ to get
	\begin{align*}
		\int_\delta^{\infty} e^{t \Delta} [g_\veps(x,\cdot)]& (x) \frac{\diff t}{t^{1+\frac {sp} 2}}
		\\
		&=\frac {2^{sp}} {\pi^{ \frac n 2 }}   \int_{\Rd} \left( \int_{0}^{ \frac{|x-y|^2}{4\delta}}  {e^{-\tau}}  \frac{\diff \tau}{\tau^{  1 - \frac{n+sp}2 }}  \right)   \frac{ g_\veps(x,y) } { |x-y|^{ n+sp}}  \diff y.
	\end{align*}
	Note that, since $g_\veps(x,y)=0$ if $|x-y|<\veps$, we can consider the above expression to be integrated in $\R^n \setminus B_\veps(x)$. Now, we want to let $\delta \to 0^+$ to get
	
	\begin{align*}
		\int_0^{\infty} e^{t \Delta} [g_\veps(x,\cdot)] (x) \frac{\diff t}{t^{1+\frac {sp} 2}}  &= \frac {2^{sp}} {\pi^{ \frac n 2 }}   \left( \int_{0}^{\infty}  {e^{-\tau}} \tau^{ - 1 + \frac{n+sp}2 } \diff \tau \right)   \int_{\Rd}   \frac{ g_\veps(x,y) } { |x-y|^{ n+sp}}  \diff y \\
		&=\frac {2^{sp} \Gamma(\frac{n+sp}2)} {\pi^{ \frac n 2 }}      \int_{\Rd}   \frac{ g_\veps(x,y) } { |x-y|^{ n+sp}}  \diff y.
	\end{align*}
	
	\noindent\textbf{Step 1:} Let either $p\in [2,\infty)$ or $p\in(1,2)$ and $\nabla u(x)\not=0$.
	\\
	Passing to the limit in the right hand side is trivial since the integrand converges pointwise and monotonically to ${e^{-\tau}} \tau^{ - 1 + \frac{n+sp}2}$ which is an integrable function in the domain $(0,+\infty)$, which yields to the definition of the $\Gamma$ function. To pass to the limit in the left hand side,  we estimate
	
	\begin{align*}
		\left|e^{t \Delta} [g_\veps(x,\cdot)](x)\right|&=   \frac{1}{(4 \pi t)^{ \frac n 2 }}\left|  \int_{|x-y|>\veps} {e^{-\frac{|x-y|^2}{4t}}} \Phi_p ( u(x) - u(y) ) \diff y\right|\\
		&\lesssim  \frac{1}{ t^{ \frac n 2 }} \int_{\R^n} {e^{-\frac{|z|^2}{4t}}} |z|^p \diff z\\
		&=  \frac{1}{ t^{ \frac n 2 }}  \int_{\R^n} {e^{-\frac{r^2}{4}}}   t^{\frac{p}{2}} |r|^p t^{\frac{n}{2}} \diff r= t^{\frac{p}{2}}  \int_{\R^n} {e^{-\frac{r^2}{4}}} |r|^p\diff r \lesssim t^{\frac{p}{2}},
	\end{align*}
	where we have used either \Cref{lem:tech1} or \Cref{lem:tech3}  with $$K(z)=  e^{-\frac{|z|^2}{4t}} \chi_{|z| > \veps}(z)$$ and the change of variables $z=t^{\frac{1}{2}}r$ i.e. $\dd z= t^{\frac{n}{2}}\dd r$.
	Then
	\[
	\begin{aligned}
	\int_\delta^{1} |e^{t \Delta} [g_\veps(x,\cdot)] (x) | \frac{\diff t}{t^{1+\frac {sp} 2}}
	&\lesssim  \int_\delta^{1} t^{\frac{p}{2}} \frac{\diff t}{t^{1+\frac {sp} 2}} = \int_\delta^{1} \frac{\diff t}{t^{1-\frac {(1-s)p} 2}} \\
	& \leq \int_0^{1} \frac{\diff t}{t^{1-\frac {(1-s)p} 2}}  <+\infty
	\end{aligned}
	\]
	where the bound comes from the fact that $(1-s)p/2>0$ since $s\in(0,1)$.

	Now we want to take limits as $\veps\to0$. In the right hand side, we use either \Cref{lem:tech1} or \Cref{lem:tech3}  with $K(z)= |z|^{-n-sp} \chi_{|z| < \veps}(z)$ to get
	\begin{align*}
		&\Bigg|\textup{P.V.}  \int_{\Rd}   \frac{ \Phi_p(u(x)-u(y)) } { |x-y|^{ n+sp}}  \diff y- \textup{P.V.}  \int_{\Rd}   \frac{ g_\veps(x,y) } { |x-y|^{ n+sp}}  \diff y\Bigg|\\
		& \leq \Bigg|\textup{P.V.}  \int_{|x-y|<\veps}   \frac{ \Phi_p(u(x)-u(y)) } { |x-y|^{ n+sp}}  \diff y \Bigg|\\
		&\lesssim \int_{|z|<\veps}   \frac{ |z|^p } { |z|^{ n+sp}}  \diff z  \\
		&\lesssim  \veps^{(1-s)p} \to 0 \qquad \textup{as} \qquad \veps\to0.
	\end{align*}
	While in the left hand side we use either \Cref{lem:tech1} or \Cref{lem:tech3}  with $K(z)= e^{-\frac{|z|^2}{4t}} \chi_{|z| < \veps}(z)$ to estimate for $t\in(0,1)$,
	\begin{align*}
		&\Bigg| \left(e^{t \Delta} [\Phi_p (  u(x) - u(\cdot) \right)](x) - e^{t \Delta} [g_\veps(x,\cdot)] (x)  \Bigg| \\
		&=   \frac{1}{(4 \pi t)^{ \frac n 2 }} \left|\int_{|x-y|<\veps} {e^{-\frac{|x-y|^2}{4t}}} \Phi_p ( u(x) - u(y) ) \dd y\right|\\
		&\lesssim   \frac{1}{ t^{ \frac n 2 }} \int_{|z|<\veps} {e^{-\frac{|z|^2}{4t}}} |z|^p\dd z
		\lesssim t^\frac{p}{2} \int_{|r|< \veps t^{-\frac{1}{2}}} {e^{-\frac{r^2}{4}}} |r|^p\dd r
	\end{align*}
	and for $t\geq1$,
	\[\begin{split}
		&| e^{t \Delta} [\Phi_p (  u(x) - u(\cdot) )](x) - e^{t \Delta} [g_\veps(x,\cdot)] (x)| \\
		&\qquad \leq \sup_{|x-y|<\veps} |\Phi_p (  u(x) - u(y) )| \\
		&\qquad \lesssim \sup_{|x-y|<\veps} |u(x)-u(y)|^{\min\{1,p-1\}} \leq \veps^{\min\{1,p-1\}}
	\end{split}
	\]
	Then
	\begin{equation}
		\label{eq:convergence semigroup}
		\begin{aligned}
			&\Bigg|\int_0^{+\infty}e^{t \Delta} [\Phi_p (  u(x) - u(\cdot) )](x) \frac{\diff t}{t^{1+\frac {sp} 2}}- \int_0^{\infty} e^{t \Delta} [g_\veps(x,\cdot)] (x) \frac{\diff t}{t^{1+\frac {sp} 2}}\Bigg|\\
			&\leq \int_0^1 \left(\int_{|r|< \veps t^{-\frac{1}{2}}} {e^{-\frac{r^2}{4}}} |r|^p\dd r \right) \frac{\diff t}{t^{1-\frac {(1-s)p} 2}} +  \veps^{\min\{1,p-1\}} \int_1^\infty \frac{\diff t}{t^{1+\frac {sp} 2}}.
		\end{aligned}
	\end{equation}
	Clearly, the second term in the last estimate goes to zero as $\veps\to0$. For the first term, let
	\[
	f_\veps(t):=\int_{|r|< \veps t^{-\frac{1}{2}}} {e^{-\frac{r^2}{4}}} |r|^p\dd r.
	\]
	For the first term, clearly $f_\veps \to0$ a.e in $t\in[0,\infty)$,
	\[
	|f_\veps(t)| \leq F(t) \equiv  \int_{\Rd} {e^{-\frac{r^2}{4}}} |r|^p\dd r
	\]
	and
	\[
	\int_0^1 F(t)  \frac{\diff t}{t^{1-\frac {(1-s)p} 2}} =  \int_{\Rd} {e^{-\frac{r^2}{4}}} |r|^p\dd r \int_0^1 \frac{\diff t}{t^{1-\frac {(1-s)p} 2}} <+ \infty,
	\]
	so by the Dominated Convergence Theorem this term also converges to zero as $\veps\to0$.
	
	\medskip
	\noindent \textbf{Step 2: } Now let $p\in(\frac{2}{2-s},2)$.  We can use  \Cref{lem:tech2} to get
	\begin{align*}
		\left|e^{t \Delta} [g_\veps(x,\cdot)](x)\right|& \lesssim t^{p-1}.
	\end{align*}
	so that
	\[
	\int_\delta^{1} |e^{t \Delta} [g_\veps(x,\cdot)] (x) | \frac{\diff t}{t^{1+\frac {sp} 2}} \lesssim \int_0^{1} t^{p-1}\frac{\diff t}{t^{1+\frac {sp} 2}}  <+\infty
	\]
	which is finite since $p-1>\frac{sp}{2}$ since $p>\frac{2}{2-s}$. That allow us to pass to the limit as $\delta \to0$. The limit as  $\veps\to0$ follows also as in Step 1 one using  \Cref{lem:tech2} instead of  \Cref{lem:tech1}.
\end{proof}

\section{The new extension problem}\label{sec:extension}
The aim of this section is to write  corresponding  formulas in the present nonlinear setting following the ideas introduced in the linear setting by Caffarelli and Silvestre \cite{Caffarelli2007}.

For $s \in (0,1)$ and $p \in (1,+\infty)$ consider the extension problem
\begin{equation}
	\label{eq:extension2}
	\begin{dcases}
		\Delta_x U(x,y) + \frac{1-sp}{y} U_y(x,y) + U_{yy}(x,y) = 0 & x \in \Rd , y > 0 , \\
		U (x,0) = f(x)& x \in \Rd ,
	\end{dcases}
\end{equation}
which extends the usual theory where $p = 2$. We will show that a classical solution of this problem can be recovered by convolution with an explicit Poisson kernel
\begin{equation}\label{eq:ext-form2}
	\begin{aligned}
	U(x,y) &= \int_{ \Rd } P(x-\xi, y)  f(\xi)  \diff \xi\,,
	\\ 	
	&\text{where} 	\quad
	P(x,y) =  \frac{\Gamma ( \frac {n + sp}2 )}{ \pi^{\frac n 2} \Gamma (\frac{sp}2)} \frac{y^{sp} }{(|x|^2 + y^2)^{\frac {n+sp}2}}.
	\end{aligned}
\end{equation}
Finally, let us define the extension operator $E_{s,p} [f] := U$ through  formula \eqref{eq:ext-form2}. With this extension operator we can give a representation formula for the $(s,p)$-Laplacian.

\begin{theorem}\label{thm:extension}
	Let $s \in (0,1)$ and $p \in (1,+\infty)$.
	Assume $u\in C^2_b(\Rd)$ and $x_0\in \Rd$. If $p \in (1,\frac{2}{2-s})$ assume additionally that $\nabla u(x_0)\not=0$. We have the following representation of the fractional $p$-Laplacian:
	\begin{equation}
		\label{eq:representation extension}
		\fpl u(x_0) =  C_3(n,s,p)\,   \lim_{y \to 0} \frac{ E_{s,p}\Big [\Phi_p \big (u(x_0)-u(\cdot) \big ) \Big ](x_0,y)  }{y^{sp}}
	\end{equation}
	where the constant is given by $C_3 (n,s,p) =  \frac{ \pi^{\frac n 2} \Gamma (\frac{sp}2)} {\Gamma ( \frac {n + sp}2 )} C_1(n,s,p) $.
\end{theorem}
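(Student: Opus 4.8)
The plan is to read everything off the explicit Poisson representation \eqref{eq:ext-form2}. Since the definition of $C_3$ gives $\Gamma(\tfrac{n+sp}{2})/(\pi^{n/2}\Gamma(\tfrac{sp}{2}))=C_1/C_3$, inserting $f(\xi)=\Phi_p(u(x_0)-u(\xi))$ into \eqref{eq:ext-form2} and dividing by $y^{sp}$ yields
\[
\frac{E_{s,p}\big[\Phi_p(u(x_0)-u(\cdot))\big](x_0,y)}{y^{sp}} = \frac{C_1}{C_3}\int_{\Rd}\frac{\Phi_p(u(x_0)-u(\xi))}{(|x_0-\xi|^2+y^2)^{\frac{n+sp}{2}}}\,\diff\xi .
\]
For each fixed $y>0$ this integral converges absolutely: at $\xi=x_0$ the numerator vanishes (as $\Phi_p(0)=0$) and the denominator is at least $y^{n+sp}$, while at infinity $|\Phi_p(u(x_0)-u(\xi))|\le(2\|u\|_\infty)^{p-1}$ and the kernel decays like $|x_0-\xi|^{-(n+sp)}$ with $n+sp>n$. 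Letting $y\to0^+$ formally replaces the kernel by $|x_0-\xi|^{-(n+sp)}$ and the integral by its principal value, which by \eqref{eq.spl1} equals $C_1^{-1}\fpl u(x_0)$; multiplying by $C_3$ produces \eqref{eq:representation extension}. The entire content of the proof is to justify this passage to the limit, and I would do it exactly along the lines of the proof of \Cref{thm:semigroup}: the desingularization is built in via $\Phi_p(0)=0$, and the finer cancellation near $x_0$ (Taylor expansion of $u$, oddness of $\Phi_p$) is supplied by the technical lemmas, while the exchange of $\lim_{y\to0}$ with the principal-value integral is controlled by truncating at radius $\varepsilon$ around $x_0$.

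Concretely, for $\varepsilon>0$ I would decompose
\[
\frac{C_3}{C_1}\,\frac{E_{s,p}\big[\Phi_p(u(x_0)-u(\cdot))\big](x_0,y)}{y^{sp}} - \PV\!\!\int_{\Rd}\frac{\Phi_p(u(x_0)-u(\xi))}{|x_0-\xi|^{n+sp}}\,\diff\xi
\]
into the inner regularized term $\int_{|x_0-\xi|<\varepsilon}\Phi_p(u(x_0)-u(\xi))(|x_0-\xi|^2+y^2)^{-\frac{n+sp}{2}}\diff\xi$, the outer term $\int_{|x_0-\xi|>\varepsilon}\Phi_p(u(x_0)-u(\xi))\big[(|x_0-\xi|^2+y^2)^{-\frac{n+sp}{2}}-|x_0-\xi|^{-(n+sp)}\big]\diff\xi$, and the inner principal-value term $\PV\!\int_{|x_0-\xi|<\varepsilon}\Phi_p(u(x_0)-u(\xi))|x_0-\xi|^{-(n+sp)}\diff\xi$. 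The outer term tends to $0$ as $y\to0$ for each fixed $\varepsilon$, by dominated convergence (the integrand is bounded by $2(2\|u\|_\infty)^{p-1}|x_0-\xi|^{-(n+sp)}\chi_{|x_0-\xi|>\varepsilon}\in L^1(\Rd)$ and vanishes pointwise). The inner principal-value term does not depend on $y$ and tends to $0$ as $\varepsilon\to0$. For the inner regularized term, bounding the kernel from above by $|x_0-\xi|^{-(n+sp)}\chi_{|x_0-\xi|<\varepsilon}$, I would apply \Cref{lem:tech1} or \Cref{lem:tech3} when $p\in[2,\infty)$, or $p\in(1,2)$ with $\nabla u(x_0)\neq0$ — obtaining a bound $\lesssim\int_{|z|<\varepsilon}|z|^{p-n-sp}\,\diff z\lesssim\varepsilon^{(1-s)p}$, finite and vanishing as $\varepsilon\to0$ since $(1-s)p>0$ — and \Cref{lem:tech2} when $p\in(\tfrac{2}{2-s},2)$, obtaining $\lesssim\varepsilon^{p(2-s)-2}$, which is meaningful precisely because $p>\tfrac{2}{2-s}$; both bounds are uniform in $y>0$. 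The same estimates (with the kernel $|z|^{-(n+sp)}\chi_{|z|<\varepsilon}$) also show that the principal value defining $\fpl u(x_0)$ is finite, so every quantity above makes sense. A standard choose-$\varepsilon$-then-$y$ argument then gives the limit, hence \eqref{eq:representation extension}.

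As a preliminary one should also record that $U:=E_{s,p}[f]$ defined by \eqref{eq:ext-form2} is a classical solution of \eqref{eq:extension2}: this is a direct computation in which the scaling $P(x,y)=y^{-n}P(x/y,1)$ reduces the degenerate elliptic equation on $\Rdp$ to an identity for $P(\cdot,1)$, and the normalization $\int_{\Rd}P(x,y)\,\diff x=1$ (a standard Beta-integral) shows that $P(\cdot,y)$ is an approximate identity, so $U(\cdot,0)=f$ for bounded continuous $f$. I expect the only genuine obstacle to be the same one as in \Cref{thm:semigroup}: the uniform-in-$y$ control of the inner regularized term in the subquadratic regime $p<2$, and in particular in the range $p\in(\tfrac{2}{2-s},2)$ where $\nabla u(x_0)$ is allowed to vanish and the threshold $\tfrac{2}{2-s}$ is exactly what makes the relevant singular integral convergent.
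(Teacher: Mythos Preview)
Your proposal is correct and follows essentially the same route as the paper: both start from the explicit Poisson representation \eqref{eq:ext-form2}, rewrite $C_3\,y^{-sp}E_{s,p}[\Phi_p(u(x_0)-u(\cdot))](x_0,y)$ as $C_1\int_{\Rd}\Phi_p(u(x_0)-u(\xi))(|x_0-\xi|^2+y^2)^{-(n+sp)/2}\diff\xi$, and then control the difference with $\fpl u(x_0)$ via the technical lemmas (\Cref{lem:tech1}, \Cref{lem:tech3}, \Cref{lem:tech2}) according to the range of $p$.

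The only noteworthy difference is organizational. You introduce an auxiliary truncation radius $\varepsilon$, split into three terms, and run a choose-$\varepsilon$-then-$y$ argument. The paper instead works directly with the nonnegative, radial \emph{difference kernel} $K_y(\eta)=|\eta|^{-(n+sp)}-(|\eta|^2+y^2)^{-(n+sp)/2}$, splits at the fixed radius $1$, applies the technical lemmas with $K_y\chi_{|\eta|\le1}$ to get a bound $\int_{|\eta|\le1}K_y(\eta)|\eta|^\alpha\diff\eta$, and concludes by the Monotone Convergence Theorem (since $K_y\searrow0$ as $y\to0$). This removes the extra parameter and the double limit. One small caution in your write-up: when you say you ``bound the kernel from above by $|x_0-\xi|^{-(n+sp)}$'' in the inner regularized term, be sure you mean applying the lemma with the actual radial kernel $(|z|^2+y^2)^{-(n+sp)/2}\chi_{|z|<\varepsilon}$ and \emph{then} bounding the resulting $\int K|z|^\alpha$; the symmetry cancellation in the lemmas is lost if you pass to absolute values of the integrand first.
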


Actually, for $p=2$ we have that
\[
\begin{aligned}
\frac{ E_{s,p}[\Phi_p(u(x)-u(\cdot))](x,y)  }{y^{sp}}&=\frac{ E_{s,2}[u(x)-u(\cdot)](x,y)  }{y^{2s}}\\
&= \frac{ E_{s,2}[u](x,0)-E_{s,2}[u](x,y)  }{y^{2s}}
\end{aligned}
\]
recovering the classical result of \cite{Caffarelli2007} for the fractional Laplacian.

\begin{remark}
	Notice that, by L'Hôpital's rule
	\begin{equation*}
		\fpl u(x_0) = \frac{C_3(n,s,p)}{sp}  \lim_{y \to 0}  y^{1-sp} \partial_y \left(E_{s,p}[\Phi_p(u(x_0)-u(\cdot))] \right)(x_0,y).
	\end{equation*}
\end{remark}
In order to prove \Cref{thm:extension} we need an important lemma that gives an alternative representation of the extension operator based on the one by Stinga and Torrea (\cite{Stinga2010}) for fractional powers of general linear operators.

\begin{lemma}\label{lem:extension}
	Let $s \in (0,1)$ and $p \in (1,+\infty)$. If  $f\in C_b(\Rd)$ then
	$E_{s,p}[f]$ is a classical  solution of \eqref{eq:extension2}. Furthermore, it can be alternatively  represented by
	\begin{equation}\label{eq:ext-form1}
		E_{s,p}[f](x,y)=  \frac{y^{sp}}{2^{sp}\Gamma(\frac {sp}2)} \int_0^\infty  e^{t\Delta} [f](x)  \,  e^{-\frac {y^2}{4t} } \frac{dt}{t^{1+\frac{sp}2}}.
	\end{equation}
\end{lemma}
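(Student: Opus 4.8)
The plan is to prove the two assertions of the lemma in reverse order: first I would show that the Poisson‑kernel formula \eqref{eq:ext-form2} and the semigroup formula \eqref{eq:ext-form1} define the same function, and then use whichever of the two is more convenient to verify that $U:=E_{s,p}[f]$ is a classical solution of \eqref{eq:extension2} (that is, $U\in C^2(\Rd\times(0,\infty))\cap C(\Rd\times[0,\infty))$, solving the PDE for $y>0$ with $U(\cdot,0)=f$). For the equivalence, I would start from the right‑hand side of \eqref{eq:ext-form1}, insert the Gauss--Weierstrass formula $e^{t\Delta}[f](x)=(4\pi t)^{-n/2}\int_{\Rd}e^{-|x-\xi|^2/(4t)}f(\xi)\,\diff\xi$, and apply Tonelli's theorem; this is legitimate because, for fixed $y>0$, the resulting non‑negative integrand is integrable on $\Rd\times(0,\infty)$ (here one uses $f\in C_b$). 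Interchanging the order of integration leaves the inner integral $\int_0^\infty(4\pi t)^{-n/2}e^{-(|x-\xi|^2+y^2)/(4t)}\,t^{-1-sp/2}\,\diff t$, which, via the substitution $\tau=(|x-\xi|^2+y^2)/(4t)$, reduces to a Gamma integral and equals $\pi^{-n/2}2^{sp}\Gamma(\tfrac{n+sp}2)(|x-\xi|^2+y^2)^{-(n+sp)/2}$. Collecting the prefactor $y^{sp}/(2^{sp}\Gamma(\tfrac{sp}2))$ reproduces exactly $\int_{\Rd}P(x-\xi,y)f(\xi)\,\diff\xi$, with the kernel and the normalising constant of \eqref{eq:ext-form2}; this is the same computation as in the proof of \Cref{thm:semigroup}, now carrying the extra Gaussian factor $e^{-y^2/(4t)}$.

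Next I would establish the regularity and the boundary condition from \eqref{eq:ext-form2}. For $y>0$ the kernel $P(\cdot,y)$ is smooth and, together with all of its $x$‑ and $y$‑derivatives, decays like $|x|^{-n-sp}$ as $|x|\to\infty$; since $f$ is bounded, differentiation under the integral sign is justified locally uniformly in $\{y>0\}$, so $U\in C^\infty(\Rd\times(0,\infty))$. The substitution $z=yw$ gives $\int_{\Rd}P(\cdot,y)=\tfrac{\Gamma((n+sp)/2)}{\pi^{n/2}\Gamma(sp/2)}\int_{\Rd}(1+|w|^2)^{-(n+sp)/2}\diff w=1$ for every $y>0$, and likewise $\int_{|z|>\rho}P(z,y)\,\diff z\to 0$ as $y\to0^+$ for each $\rho>0$. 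Hence $\{P(\cdot,y)\}_{y>0}$ is an approximate identity, and the standard argument (using only boundedness and continuity of $f$) yields $U\in C(\Rd\times[0,\infty))$ with $U(\cdot,0)=f$.

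To get the degenerate elliptic equation I would work with \eqref{eq:ext-form1}. The crucial observation, valid for every $sp>0$, is that the kernel $\Psi(y,t):=y^{sp}\,t^{-1-sp/2}\,e^{-y^2/(4t)}$ satisfies $\partial_t\Psi=\partial_{yy}\Psi+\tfrac{1-sp}{y}\partial_y\Psi$; this is a direct differentiation in which the contributions proportional to $y^{sp-2}$ cancel. Writing $w(x,t)=e^{t\Delta}[f](x)$, which solves $w_t=\Delta_x w$, and using $\|\Delta_x w(\cdot,t)\|_\infty\lesssim t^{-1}\|f\|_\infty$ together with the super‑exponential decay of $\Psi(y,\cdot)$ at $t=0^+$ to move the derivatives in and out of the integral, I would obtain, for $y>0$,
\[
\Delta_x U(x,y)=\frac{1}{2^{sp}\Gamma(\tfrac{sp}2)}\int_0^\infty w_t(x,t)\,\Psi(y,t)\,\diff t=-\frac{1}{2^{sp}\Gamma(\tfrac{sp}2)}\int_0^\infty w(x,t)\,\partial_t\Psi(y,t)\,\diff t,
\]
where the boundary terms in the $t$‑integration by parts vanish ($e^{-y^2/(4t)}\to0$ at $t=0^+$, $t^{-1-sp/2}\to0$ at $t=\infty$, with $w$ bounded). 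Substituting the kernel identity and pulling $\partial_y,\partial_{yy}$ back out of the integral gives $\Delta_x U+\tfrac{1-sp}{y}U_y+U_{yy}=0$, which together with the two previous steps finishes the proof.

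The one genuinely delicate point is that $sp$ ranges over all of $(0,\infty)$ here (since $p$ is unrestricted), so one cannot simply invoke the Caffarelli--Silvestre / Stinga--Torrea framework, whose weight $y^{1-sp}$ belongs to the Muckenhoupt class $A_2$ only when $sp<2$. One must therefore carry out each interchange of limits --- Tonelli in the equivalence step, differentiation under the integral for the regularity, and the $t$‑integration by parts for the PDE --- by hand. Fortunately all of them rest only on the elementary bounds $\|e^{t\Delta}f\|_\infty\le\|f\|_\infty$ and $\|\Delta_x e^{t\Delta}f\|_\infty\lesssim t^{-1}\|f\|_\infty$ and on the Gaussian factor $e^{-y^2/(4t)}$, none of which is sensitive to the size of $sp$.
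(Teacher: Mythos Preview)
Your proposal is correct and its overall architecture matches the paper's: establish the equivalence of \eqref{eq:ext-form1} and \eqref{eq:ext-form2} by inserting the Gauss--Weierstrass kernel, swapping the order of integration, and reducing the $t$-integral to a Gamma function via $\tau=(|x-\xi|^2+y^2)/(4t)$; then verify the boundary condition by the approximate-identity property of $P(\cdot,y)$ (the paper phrases this as $P(\cdot,y)\to\delta_0$, using $\int P(\cdot,y)=1$ and $P(x,y)\to0$ for $x\neq0$). One cosmetic point: you invoke Tonelli, but the integrand carries the sign of $f$; what you actually do is apply Tonelli to $|f|$ to justify Fubini, exactly as the paper does.

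The one genuine difference is in how you verify the PDE. The paper works with the Poisson kernel and simply observes (citing \cite{Caffarelli2007}) that $P$ itself satisfies $\Delta_x P+\tfrac{1-sp}{y}P_y+P_{yy}=0$, whence the same holds for $U=P(\cdot,y)\ast f$. You instead use the semigroup representation \eqref{eq:ext-form1}: the identity $\partial_t\Psi=\partial_{yy}\Psi+\tfrac{1-sp}{y}\partial_y\Psi$ for $\Psi(y,t)=y^{sp}t^{-1-sp/2}e^{-y^2/(4t)}$, together with $w_t=\Delta_x w$ and an integration by parts in $t$. Both routes are short and valid for all $sp\in(0,\infty)$; the paper's is slightly more economical because the computation on $P$ is classical, while yours is closer to the Stinga--Torrea viewpoint the lemma is modelled on and makes transparent why the factor $e^{-y^2/(4t)}$ is exactly what turns the heat flow in $x$ into the degenerate elliptic equation in $(x,y)$.
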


\begin{proof} We proceed in several steps.
	
	\noindent \textbf{Step 1.} We will check here that the pointwise formulas \eqref{eq:ext-form1} and \eqref{eq:ext-form2} are well defined for $y>0$.
	
	First, for \eqref{eq:ext-form1}, we use the explicit representation of $e^{t\Delta}$, and the fact that $f$ is bounded
	\begin{align*}
		&\left|\int_0^\infty e^{t\Delta}\left[f \right] (x) \,  e^{-\frac {y^2}{4t} } \frac{dt}{t^{1+\frac{sp}2}}\right|\\
		&= \left|\int_0^\infty e^{-\frac {y^2}{4t} } \left(\int_{\R^n} \frac{1}{(4 \pi t)^{ \frac n 2 }} e^{-\frac{|x-\xi|^2}{4t}}f(\xi) \diff \xi  \right)  \frac{dt}{t^{1+\frac{sp}2}}\right|\\
		& \lesssim \int_0^\infty e^{-\frac {y^2}{4t} } \left(\int_{\R^n} \frac{1}{(4 \pi t)^{ \frac n 2 }} e^{-\frac{|\xi|^2}{4t}} \diff \xi  \right)  \frac{dt}{t^{1+\frac{sp}2}} \leq  \int_0^\infty e^{-\frac {y^2}{4t} }   \frac{dt}{t^{1+\frac{sp}2}}.
	\end{align*}
	The last term is clearly finite since $sp/2>0$ and  $e^{-\frac {y^2}{4t} } /t^{\alpha} \to 0$ as $t\to 0$ for any $\alpha\in \R$ as long as $y>0$.
	
	Second, for  \eqref{eq:ext-form2},
	\[
	\left|\int_{ \Rd }  \frac{f(\xi) }{(|x-\xi|^2 + y^2)^{\frac {n+sp}2}} \diff \xi\right |\lesssim \int_{ \Rd }  \frac{\dd \xi  }{(|\xi|^2 + y^2)^{\frac {n+sp}2}}
	\]
	which again is finite since $K(\xi):=(|\xi|^2+y^2)^{-\frac{n+2s}{2}}$ is bounded as long as $y>0$ and $K(\xi)\leq |\xi|^{-(n+2s)}$ which is an integrable function in $\R^n\setminus B_1$.
	
	\textbf{Step 2.}  Now we check that both pointwise formulas are equivalent. Notice that, by Step 1, all the integrals below are well defined, and they allow us to do Fubini to change the order of integration. Then
	
	\begin{align*}
		&\frac{y^{sp}}{2^{sp}\Gamma(\frac {sp}2)} \int_0^\infty  e^{t\Delta} \left[f \right] (x) \,  e^{-\frac {y^2}{4t} } \frac{dt}{t^{1+\frac{sp}2}}\\
		& = \frac{y^{sp}}{2^{sp}\Gamma(\frac {sp}2)} \int_0^\infty \int_{\Rd} \frac{1}{(4 \pi t)^{ \frac n 2 }} e^{-\frac{|x-\xi|^2+y^2}{4t}}f(\xi)\diff \xi \frac{dt}{t^{1+\frac{sp}2}}  \\
		&=  \int_{\Rd} \left( \frac{y^{sp}}{2^{n+sp} \pi^{\frac n 2}\Gamma(\frac {sp}2)} \int_0^\infty  e^{-\frac{|x-\xi|^2+y^2}{4t}}\frac{dt}{t^{1+\frac{n+sp}2}}   \right) f(\xi) \diff \xi\\
		&= \int_{\Rd}  \left(\frac{ y^{sp}}{ \pi^{\frac n 2}\Gamma(\frac {sp}2) (|x-\xi|^2 + y^2)^{\frac{n+sp}2}}   \int_{0}^{\infty}  {e^{-\tau}} \tau^{ - 1 + \frac{n+sp}2 } \diff \tau\right) f(\xi) \dd \xi\\
		&= \int_{\Rd} P(x-\xi,y) f(\xi) \dd \xi.
	\end{align*}
	
	\textbf{Step 3. } Now we check that formulas \eqref{eq:ext-form1} and \eqref{eq:ext-form2} satisfy  \eqref{eq:extension2}. We start with the boundary condition. To do so, it is sufficient to show that 	
	\begin{equation*}
		P(x,y) \to \delta_0 (x) \quad \textup{as} \quad y\to0^+.
	\end{equation*}
	For $x\not=0$ it is easy to check from \eqref{eq:ext-form2} that we have
	\[
	P(x,y) \to 0  \quad \textup{as} \quad y\to0^+.
	\]
	It is also standard to get
	\begin{equation*}
		\begin{split}
			\int_{\Rd}P(x,y)\dd x&=  \frac{y^{sp}}{2^{sp} \Gamma(\frac {sp}2)}  \int_0^\infty e^{\frac{y^2}{4t}} \left( \int_{\Rd}\frac{1}{(4\pi t)^\frac{n}{2}} e^{-\frac{|x|^2}{4t}} \dd x \right) \frac{dt}{t^{1+\frac{sp}2}}\\
			&
			=  \frac{y^{sp}}{2^{sp} \Gamma(\frac {sp}2)}  \int_0^\infty e^{\frac{y^2}{4t}}  \frac{dt}{t^{1+\frac{sp}2}}=1.
		\end{split}
	\end{equation*}
	Finally, it is easy to compute (see for example \cite{Caffarelli2007}) that for all $y>0$ we have
	\begin{align*}
		\Delta_x P + \frac{1 - sp} y \frac{\partial P}{\partial y} + \frac{\partial^2 P}{\partial y^2} = 0.
	\end{align*}
	We have that $U$ is a pointwise solution of the extension equation.
\end{proof}

\begin{proof}[Proof of \Cref{thm:extension}]
	
	We need to check \eqref{eq:representation extension}. For $y>0$ we have, from \Cref{lem:extension}, the following identity
	\begin{align*}
		&C_3\frac{  E_{s,p}\Big [\Phi_p \big (u(x_0)-u(\cdot) \big ) \Big ](x_0,y)  }{y^{sp}} \\
		 &= C_3 \int_{ \Rd } \frac{ P(x_0-\xi, y) }{y^{sp}} \Phi_p (u(x_0) - u(\xi))  \diff \xi.\\
		&=C_1 \int_{ \Rd } \frac{ \Phi_p (u(x_0) - u(\xi)) }{(|x_0-\xi|^2+y^2)^{\frac{n+sp}{2}}} \diff \xi.
	\end{align*}
	Then,
	\begin{align*}
		&\Bigg|\fpl u(x) -C_3\frac{  E_{s,p}\Big [\Phi_p \big (u(x_0)-u(\cdot) \big ) \Big ](x_0,y)  }{y^{sp}}  \Bigg|\\
		&= C_1 \left| \int_{ \Rd }  \Phi_p (u(x_0) - u(\xi)) \left(\frac{1}{|x_0-\xi|^{n+sp}}- \frac{1}{(|x_0-\xi|^2+y^2)^{\frac{n+sp}{2}}}\right) \diff \xi \right|.
	\end{align*}
	Let us denote
	\[
	K_y(\eta)= \left(\frac{1}{|\eta|^{n+sp}}- \frac{1}{(|\eta|^2+y^2)^{\frac{n+sp}{2}}}\right)
	\]
	and estimate
	\begin{align*}
		&\Bigg|\fpl u(x) -C_3\frac{  E_{s,p}\Big [\Phi_p \big (u(x_0)-u(\cdot) \big ) \Big ](x_0,y)  }{y^{sp}}  \Bigg|\\
		&\qquad \qquad \lesssim \left| \int_{ |x_0-\xi|\leq1 } \Phi_p (u(x_0) - u(\xi)) K_y(x_0-\xi) \diff \xi \right| \\
		&\qquad \qquad   + \left| \int_{ |x_0-\xi|>1 } K_y(x_0-\xi) \diff \xi \right|.
	\end{align*}
	Note that the last term goes to zero by the Monotone Convergence Theorem since $K_y(x-\xi)\to0$ as $y\to0$ in a monotone way.  Now, from either  \Cref{lem:tech1},  \Cref{lem:tech3} and  \Cref{lem:tech2} we get
	\[
	\left| \int_{ |\eta|\leq1 } \Phi_p (u(x_0) - u(\xi)) K_y(x_0-\xi) \diff \xi \right| \leq \int_{ |\xi|\leq1 } K_y(\xi)|\xi|^\alpha \diff \xi,
	\]
	for $\alpha$ either $p$ or $2p-2$ depending on the range of $p$. This term again goes to zero as $y\to0$ by the Monotone Convergence Theorem.
\end{proof}

\section{Nonlinear Balakrishnan's formula}\label{sec:Balak}

The aim of this section is to prove the corresponding Balakrishnan-type formula for $p \ne 2$.

\begin{theorem}\label{thm:Balak}
	Let $s \in (0,1)$ and $p \in (1,+\infty)$.
	Assume $u\in C^2_b(\Rd)$ and $x_0\in \Rd$. If $p \in (1,\frac{2}{2-s})$ assume additionally that $\nabla u(x_0)\not=0$. We have the following representation of the fractional $p$-Laplacian:
	\begin{equation}\label{eq:BalakSec}
		\fpl u(x_0) =  C_4(n,s,p) \int_0^\infty \Delta (t-\Delta)^{-1} [\Phi_p (  u(x_0) - u(\cdot) )](x_0) \frac{\diff t}{t^{1-\frac {sp} 2}},
	\end{equation}
	where the constant is given by \ $C_4 (n,s,p) =  \frac{ \pi^{\frac n 2} } {2^{sp}\Gamma(\frac{2+sp}{2})\Gamma ( \frac {n + sp}2 )} C_1(n,s,p) $.
\end{theorem}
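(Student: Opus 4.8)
The plan is to deduce the formula from the semigroup representation of \Cref{thm:semigroup} via the classical subordination identity linking the resolvent of $-\Delta$ with the heat semigroup. Fix $x_0$ and put $f(y):=\Phi_p(u(x_0)-u(y))$; this function is bounded since $u\in C^2_b(\Rd)$, and — the decisive point — it vanishes at $y=x_0$, because $\Phi_p(0)=0$. Writing $A=-\Delta\ge 0$, one has for every $t>0$ the operator identity $(t-\Delta)^{-1}=(t+A)^{-1}=\int_0^\infty e^{-t\sigma}e^{\sigma\Delta}\,\diff\sigma$, whence $\Delta(t-\Delta)^{-1}=t(t-\Delta)^{-1}-I$, and since $t\int_0^\infty e^{-t\sigma}\diff\sigma=1$,
\begin{equation*}
\Delta(t-\Delta)^{-1}[f](x_0)=t\int_0^\infty e^{-t\sigma}\big(e^{\sigma\Delta}[f](x_0)-f(x_0)\big)\diff\sigma = t\int_0^\infty e^{-t\sigma}\,e^{\sigma\Delta}[f](x_0)\,\diff\sigma,
\end{equation*}
the last step using $f(x_0)=0$. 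Note that keeping the term $-f(x_0)$ would ruin integrability at $t=0$ against the measure $\diff t/t^{1-sp/2}$, so the subtraction built into $\Phi_p(u(x_0)-u(\cdot))$ is exactly what desingularises the formula. As in \Cref{sec:semigrep} one may first carry out this computation with the truncated datum $g_\veps(x_0,\cdot)=\Phi_p(u(x_0)-u(\cdot))\chi_{|x_0-\cdot|>\veps}$ — so that every pointwise expression is manifestly finite — and remove the truncation afterwards.

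Next I would multiply by $t^{-1+sp/2}$, integrate in $t$ over $(0,\infty)$, and swap the order of integration:
\begin{equation*}
\int_0^\infty \Delta(t-\Delta)^{-1}[f](x_0)\,\frac{\diff t}{t^{1-\frac{sp}2}} = \int_0^\infty e^{\sigma\Delta}[f](x_0)\left(\int_0^\infty t^{\frac{sp}2}e^{-t\sigma}\diff t\right)\diff\sigma = \Gamma\!\Big(1+\tfrac{sp}2\Big)\int_0^\infty e^{\sigma\Delta}[f](x_0)\,\frac{\diff\sigma}{\sigma^{1+\frac{sp}2}}.
\end{equation*}
The Fubini step and the finiteness of all integrals are justified by the pointwise bounds on $e^{\sigma\Delta}[\Phi_p(u(x_0)-u(\cdot))](x_0)$ obtained in the proof of \Cref{thm:semigroup} — namely $|e^{\sigma\Delta}[f](x_0)|\lesssim\sigma^{\beta}$ for $\sigma\in(0,1)$, with $\beta=p/2$ when $p\ge2$ (or $p<2$ and $\nabla u(x_0)\ne0$) and $\beta=p-1$ when $p\in(\tfrac{2}{2-s},2)$, together with the trivial bound $|e^{\sigma\Delta}[f](x_0)|\le\|f\|_{L^\infty(\Rd)}$ for $\sigma\ge1$. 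These give $\int_0^\infty|e^{\sigma\Delta}[f](x_0)|\,\sigma^{-1-sp/2}\diff\sigma<\infty$ — the hypothesis $s<1$, respectively $p>\tfrac{2}{2-s}$, being precisely what makes this converge at $\sigma=0$ — and a short splitting of the $\sigma$-integral into $(0,1)$ and $(1,\infty)$ shows the corresponding convergence of the $t$-integral both near $0$ (behaviour $t^{sp/2-1}$) and near $+\infty$ (behaviour $t^{-1-(1-s)p/2}$, resp.\ $t^{-p(2-s)/2}$).

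Finally, by \Cref{thm:semigroup} the last $\sigma$-integral equals $C_2(n,s,p)^{-1}\fpl u(x_0)$, so that
\begin{equation*}
\fpl u(x_0)=\frac{C_2(n,s,p)}{\Gamma\big(1+\tfrac{sp}2\big)}\int_0^\infty \Delta(t-\Delta)^{-1}[\Phi_p(u(x_0)-u(\cdot))](x_0)\,\frac{\diff t}{t^{1-\frac{sp}2}},
\end{equation*}
and since $\Gamma(1+\tfrac{sp}2)=\Gamma(\tfrac{2+sp}2)$ and $C_2(n,s,p)=\pi^{n/2}C_1(n,s,p)/\big(2^{sp}\Gamma(\tfrac{n+sp}2)\big)$, this is exactly \eqref{eq:BalakSec} with the stated value of $C_4(n,s,p)$.

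The only genuine obstacle is the rigorous meaning of $(t-\Delta)^{-1}$ and of $\Delta(t-\Delta)^{-1}[f](x_0)$ for the merely bounded, non-decaying datum $f$, together with the validity of the subordination identity in this generality. I would handle this by \emph{defining} the resolvent through convolution with the explicit kernel $G_t(x)=\int_0^\infty e^{-t\sigma}(4\pi\sigma)^{-n/2}e^{-|x|^2/4\sigma}\diff\sigma$, a positive function in $L^1(\Rd)$ with $\|G_t\|_{L^1}=1/t$, and then reading off $(t-\Delta)G_t\ast f=f$, i.e.\ $\Delta(t-\Delta)^{-1}[f]=t(t-\Delta)^{-1}[f]-f$, at the kernel level; with this definition everything above becomes an application of Fubini controlled by the estimates already established in \Cref{sec:semigrep}.
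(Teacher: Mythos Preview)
Your proof is correct and takes a genuinely different route from the paper's. The paper proves \Cref{thm:Balak} directly from the singular-integral definition \eqref{eq.spl1}: it first establishes the kernel representation $\Delta(t-\Delta)^{-1}f=R_t\ast f-f$ with the self-similar form $R_t(x)=t^{n/2}W(t^{1/2}|x|)$ of \Cref{lem:representationformBalak}, then computes, for the truncated datum $g_\veps$, the $t$-integral $\int_0^\infty R_t(x-y)\,t^{sp/2-1}\diff t$ explicitly as $C\,|x-y|^{-(n+sp)}$ via two Gamma-function integrals, and finally removes the $\veps$-truncation. Your argument instead factors through \Cref{thm:semigroup}: using the subordination identity $(t-\Delta)^{-1}=\int_0^\infty e^{-t\sigma}e^{\sigma\Delta}\diff\sigma$ and the crucial observation $f(x_0)=0$, you reduce the Balakrishnan integral to $\Gamma(1+\tfrac{sp}2)$ times the semigroup integral, and then invoke the already-proved \Cref{thm:semigroup}. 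What your route buys is economy --- no self-similar profile $W$, no explicit double Gamma computation, and all the delicate analysis (the $\veps$-truncation and the small-time estimates from \Cref{lem:tech1}, \Cref{lem:tech2}, \Cref{lem:tech3}) is imported wholesale from the proof of \Cref{thm:semigroup}. What the paper's route buys is independence: it gives a stand-alone proof of \eqref{eq:BalakSec} that does not rely on \Cref{thm:semigroup}, and it makes the kernel structure of $\Delta(t-\Delta)^{-1}$ explicit, which is useful elsewhere (e.g.\ \Cref{sec:new spectral}). Your handling of the only real technical point --- justifying Fubini via the bounds $|e^{\sigma\Delta}[f](x_0)|\lesssim\sigma^{\beta}$ for small $\sigma$ (with $\beta=\tfrac p2$ or $p-1$) and $\le\|f\|_{L^\infty}$ for large $\sigma$, and defining the resolvent by convolution with the explicit $L^1$ kernel --- is sound and matches what the paper does in \Cref{lem:representationformBalak}.
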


Notice that, when $p = 2$ we have that
\begin{align*}
	\Delta (t-\Delta)^{-1} [\Phi_p (  u(x_0) - u(\cdot) )] &= \Delta  (t - \Delta)^{-1} [u(x_0) ] - \Delta(t - \Delta)^{-1} [u (\cdot) ] \\
	&= - \Delta(t - \Delta)^{-1} [u].
\end{align*}
Hence, we recover the usual Balakrishnan's formula.

To show \eqref{eq:BalakSec} in \Cref{thm:Balak} we need some preliminary results concerning the representation formulas of $ (t-\Delta)^{-1} $ and  $\Delta (t-\Delta)^{-1} $.

\begin{lemma}\label{lem:representationformBalak} Let $f\in L^1(\R^n)\cap L^\infty(\R^n)$.
	The following representation formulas hold for all $t\geq0$:
	\begin{enumerate}[{\rm (a)}]
		\item\label{repfor-item1} $(t-\Delta)^{-1}f=K_t*f$ with $\displaystyle K_t(x)=\int_0^\infty e^{-t\tau} G(\tau,x) \dd \tau$ where $G$ is the Gaussian Kernel, i.e.,
		\[
		G(\tau,x)=\frac{1}{(4\pi \tau)^{\frac{n}{2}}} e^{-\frac{|x|^2}{4\tau}}
		\]
		\item\label{repfor-item2} $\Delta(t-\Delta)^{-1}f= R_t*f - f $ where $R_t(x)=tK_t(x)$. Moreover, $R_t(x)=t^{\frac{n}{2}}W(t^{\frac{1}{2}}|x|)$ with $W:\R_+\to\R_+$ given by
		\begin{equation}\label{eq:self-similarP}
			W(\rho)=\frac{\rho^{2-n}}{(4\pi)^{\frac{n}{2}}}\int_0^\infty e^{-\rho^2 w} w^{-\frac{n}{2}} e^{-\frac{1}{4w}} \dd w.
		\end{equation}
	\end{enumerate}
\end{lemma}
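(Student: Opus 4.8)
The plan is to deduce both formulas from the elementary scalar identity $(t+\lambda)^{-1}=\int_0^\infty e^{-\tau(t+\lambda)}\,\dd\tau$, valid whenever $t+\lambda>0$, applied with $\lambda$ replaced by the nonnegative operator $-\Delta$ through the Fourier transform, together with the fact that $e^{\tau\Delta}$ has Gaussian kernel $G(\tau,\cdot)$. For \textbf{part (a)}: for $f\in L^1(\R^n)\cap L^\infty(\R^n)$ and $t>0$ the resolvent $w:=(t-\Delta)^{-1}f$ is the unique tempered distribution solving $tw-\Delta w=f$, i.e. $\widehat w(\xi)=(t+|\xi|^2)^{-1}\widehat f(\xi)$ on the Fourier side. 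Writing $(t+|\xi|^2)^{-1}=\int_0^\infty e^{-t\tau}e^{-\tau|\xi|^2}\,\dd\tau$ and recognizing $e^{-\tau|\xi|^2}=\widehat{G(\tau,\cdot)}(\xi)$ gives $\widehat w=\widehat{K_t}\,\widehat f$ with $\widehat{K_t}(\xi)=\int_0^\infty e^{-t\tau}e^{-\tau|\xi|^2}\,\dd\tau$, whence $K_t(x)=\int_0^\infty e^{-t\tau}G(\tau,x)\,\dd\tau$ and $w=K_t*f$. The only thing to verify is that $K_t$ is a genuine function and that the interchanges are legitimate: since $G(\tau,\cdot)\ge 0$ with $\int_{\R^n}G(\tau,x)\,\dd x=1$ and $\int_0^\infty e^{-t\tau}\,\dd\tau=1/t<\infty$, Tonelli's theorem applies and yields $K_t\ge 0$, $K_t\in L^1(\R^n)$ with $\|K_t\|_{L^1}=1/t$ (evaluate $\widehat{K_t}$ at $\xi=0$); and for $x\neq 0$ the integrand $e^{-t\tau}G(\tau,x)$ is bounded near $\tau=0$ and decays like $e^{-t\tau}\tau^{-n/2}$ at infinity, so $K_t(x)<\infty$. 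These are the estimates already used in Step~1 of the proof of \Cref{lem:extension}. At the endpoint $t=0$ (and $n\ge 3$) one recovers the Riesz kernel $K_0(x)=\int_0^\infty G(\tau,x)\,\dd\tau=\frac{\Gamma(\frac n2-1)}{4\pi^{n/2}}\,|x|^{2-n}$, i.e. $(-\Delta)^{-1}$.

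For \textbf{part (b)} I would first invoke the purely algebraic resolvent identity
\[
\Delta(t-\Delta)^{-1}=-\big[(t-\Delta)-t\big](t-\Delta)^{-1}=-I+t\,(t-\Delta)^{-1},
\]
so that, by part (a), $\Delta(t-\Delta)^{-1}f=t\,K_t*f-f=R_t*f-f$ with $R_t:=tK_t$; this is licit because $(t-\Delta)^{-1}f=K_t*f\in W^{2,q}_{\mathrm{loc}}(\R^n)$ for every $q\in(1,\infty)$ by elliptic regularity, so $\Delta(K_t*f)$ is a true $L^q$ function and the identity holds a.e. To reach the self-similar form \eqref{eq:self-similarP}, substitute $\tau=|x|^2 w$ in $R_t(x)=t\int_0^\infty e^{-t\tau}G(\tau,x)\,\dd\tau$: then $G(\tau,x)=(4\pi|x|^2w)^{-n/2}e^{-1/(4w)}$ and $\dd\tau=|x|^2\,\dd w$, whence
\[
R_t(x)=\frac{t\,|x|^{2-n}}{(4\pi)^{n/2}}\int_0^\infty e^{-t|x|^2w}\,w^{-n/2}\,e^{-\frac1{4w}}\,\dd w=t^{\frac n2}\,W\!\big(t^{\frac12}|x|\big),
\]
where in the last equality one sets $\rho=t^{1/2}|x|$ and uses $t\,|x|^{2-n}=t^{n/2}\rho^{2-n}$ and $t|x|^2w=\rho^2 w$; this is exactly \eqref{eq:self-similarP}, and $W>0$ is immediate from the integrand. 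I would also record that $R_t\ge 0$ and $\|R_t\|_{L^1}=t\|K_t\|_{L^1}=1$, so $R_t$ is a probability density, a fact convenient later.

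The substance of the lemma is essentially bookkeeping; the only genuinely delicate points are (i) justifying Tonelli/Fubini in the passage from the operator identity to the explicit kernel, and (ii) interpreting $(t-\Delta)^{-1}$ at $t=0$. Both are settled by the hypothesis $f\in L^1\cap L^\infty$ and the Gaussian estimates already exploited for \Cref{lem:extension}; once these are in place, everything reduces to the single change of variables $\tau=|x|^2w$.
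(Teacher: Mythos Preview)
Your proof is correct and follows essentially the same route as the paper: the Fourier-side identity $(t+|\xi|^2)^{-1}=\int_0^\infty e^{-t\tau}e^{-\tau|\xi|^2}\,\dd\tau$ for part~(a), and the substitution $\tau=|x|^2 w$ for the self-similar form of $R_t$, are exactly what the paper does. The one difference worth noting is in part~(b): the paper works at the kernel level, invoking the distributional identity $tK_t-\Delta K_t=\delta_0$ (with a reference to \cite{Martinez2001} and the formal computation $\Delta K_t=\int_0^\infty e^{-t\tau}\partial_\tau G\,\dd\tau=-\delta_0+tK_t$), whereas you use the purely algebraic operator identity $\Delta(t-\Delta)^{-1}=-I+t(t-\Delta)^{-1}$ and then apply part~(a). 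Your step is slightly more elementary and self-contained, since it bypasses the distributional calculation on the kernel; the paper's version, on the other hand, makes the structure $R_t-\delta_0=\Delta K_t$ explicit at the level of fundamental solutions. Your additional observations ($\|K_t\|_{L^1}=1/t$, $\|R_t\|_{L^1}=1$, and the $t=0$ Riesz-kernel limit) are not in the paper's proof of the lemma itself, though the paper does use $\|R_t\|_{L^1}=1$ later in the proof of \Cref{thm:Balak}.
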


\begin{proof}
	First, taking the Fourier transform
	\begin{align*}
		\mathcal F [  (t - \Delta) ^{-1} f  ] (\xi) &= (t + |\xi|^2)^{-1} \mathcal F [f] (\xi)= \left( \int_0^\infty e^{-t \tau} e^{-|\xi|^2 \tau} \diff\tau \right) \mathcal F [f] (\xi) \\
		&= \mathcal F \left[  \left( \int_0^\infty e^{-t \tau} \mathcal F^{-1} [e^{-|\cdot|^2 \tau}] \diff\tau \right) \star f \right] (\xi) .
	\end{align*}
	Since $\mathcal F[G(t, \cdot)] (\xi) = e^{-|\xi|^2 t}$ we recover \eqref{repfor-item1}.
	As shown in \cite[Section 3.2]{Martinez2001}
	we have
	\begin{equation*}
		t K_t - \Delta K_t = \delta_0.
	\end{equation*}
	The intuition for \eqref{repfor-item2}  is clear and it comes from the fact that $\partial_t G - \Delta G = \delta_0$. We can formally write
	\begin{align*}
		\Delta K_t &= \int_0^\infty e^{-t \tau} \Delta G \diff \tau = \int_0^\infty e^{-t\tau} \partial_t G \diff \tau \\
		&= e^{-t\tau} G\Big|_0^\infty + t \int_0^\infty e^{-t\tau} G \diff \tau = 0 - \delta_0 + t K_t.
	\end{align*}
	Hence,
	\begin{equation*}
		\Delta (t- \Delta)^{-1} f = \Delta (K_t \star f) = (\Delta K_t ) \star f = (R_t - \delta_0 ) \star f = R_t \star f - f.
	\end{equation*}
	We check \eqref{eq:self-similarP} now. First, we note that $G(\tau,x)=\tau^{-\frac{n}{2}}F(\tau^{-\frac{1}{2}}|x|)$ with $F:\R_+\to\R_+$ given by
	\[
	F(\rho)=\frac{1}{(4\pi)^{\frac{n}{2}}} e^{-\frac{\rho^2}{4}}.
	\]
	Then,
	\begin{align*}
		R_t(x) &=t \int_0^\infty e^{-t\tau} \tau^{-\frac{n}{2}}F(\tau^{-\frac{1}{2}}|x|) \dd \tau\\
		&= t |x|^{2-n} \int_0^\infty e^{-t|x|^2  w } w^{-\frac{n}{2}}F(w^{-\frac{1}{2}})d w \\
		&= t^{\frac{n}{2}} (t^{\frac{1}{2}}|x|)^{2-n} \int_0^\infty e^{-(t^{\frac{1}{2}}|x|)^2 w}  w^{-\frac{n}{2}}F(w^{-\frac{1}{2}})d w.
	\end{align*}
\end{proof}

We are now ready to prove \Cref{thm:Balak}.

\begin{proof}[Proof of  \Cref{thm:Balak}]
	We proceed in several steps.
	
	\noindent \textbf{Step 1:} We will check here that given $f\in L^\infty(\R^n)$,  $\alpha<0$ and a finite $M>0$ we have that
	\[
	\int_0^M |\Delta (t-\Delta)^{-1}f(x)| \frac{\diff t}{t^{1-\alpha}}<+\infty.
	\]
	From \Cref{lem:representationformBalak} we have
	\[
	\begin{aligned}
	&|\Delta (t-\Delta)^{-1}f(x)|= |R_t*f(x)| +|f(x)|\\
	&\qquad \leq(\|R_t\|_{L^1(\R^N)}+1) \|f\|_{L^\infty(\R^N)}=  2\|f\|_{L^\infty(\R^N)}
	\end{aligned}
	\]
	where we have used the fact that $\|G(\tau,|\cdot|)\|_{L^1(\R^n)}=1$, which implies
	\[
	\|R_t\|_{L^1(\R^N)} = \int_{\R^n} \left(t\int_0^\infty e^{-t \tau}G(\tau,|x|) \dd\tau\right)\dd x= t\int_0^\infty e^{-t \tau} \dd\tau=1.
	\]
	From this observation, we trivially get
	\[
	\int_0^M |\Delta (t-\Delta)^{-1}f(x)| \frac{\diff t}{t^{1-\alpha}}< 2\frac{M^\alpha}{\alpha}\|f\|_{L^\infty(\R^N)} <+\infty.
	\]
	\textbf{Step 2:} Now, given $\veps>0$, we consider the bounded function $g_\veps:\R^n\times \R^n\to \R$ given by $g_\veps(x,y)= \Phi_p ( u(x) - u(y) ) \chi_{|x-y| > \varepsilon}$.  Note that $g_\veps(x,x)=0$. Here we will check that the following identity holds
	\begin{equation}
		\label{eq:Balakidentity1}
		\int_0^\infty \Delta (t-\Delta)^{-1} [g_\veps(x,\cdot)](x) \frac{\diff t}{t^{1-\frac {sp} 2}}= C \int_{|x-y|>\veps} \frac{\Phi_p(u(x)-u(y))}{|x-y|^{n+sp}}\dd y.
	\end{equation}
	where $C= \frac{2^{sp}}{\pi^{\frac{n}{2}}}  \Gamma(1+\frac{sp}{2}) \Gamma(\frac{n+sp}{2})>0$.
	
	\smallskip
	
	\textbf{Step 2 (a):} First we check that
	
	\begin{equation}\label{eq:absoluteconvBalak}
		\int_0^M |\Delta (t-\Delta)^{-1} [g_\veps(x,\cdot)](x)| \frac{\diff t}{t^{1-\frac {sp} 2}}<+\infty
	\end{equation}
	uniformly in $M$. In this way, the Dominated convergence theorem we get
	\begin{equation}\label{eq:limitMBalak}
		\begin{aligned}
		&\int_0^M  \Delta (t-\Delta)^{-1} [g_\veps(x,\cdot)](x) \frac{\diff t}{t^{1-\frac {sp} 2}} \\
		&\qquad \stackrel{M\to\infty}{\longrightarrow} \int_0^\infty  \Delta (t-\Delta)^{-1} [g_\veps(x,\cdot)](x) \frac{\diff t}{t^{1-\frac {sp} 2}}<+\infty.
		\end{aligned}
	\end{equation}
	To check \eqref{eq:absoluteconvBalak}, we assume first that $p\geq2$, and use the explicit representation of $\Delta(t-\Delta)^{-1}$ given in \Cref{lem:representationformBalak} together with \Cref{lem:tech1} to get
	\begin{align*}
		&\int_0^M |\Delta (t-\Delta)^{-1} [g_\veps(x,\cdot)](x)| \frac{\diff t}{t^{1-\frac {sp} 2}} \\
		&=\int_0^1 |\Delta (t-\Delta)^{-1} [g_\veps(x,\cdot)](x)| \frac{\diff t}{t^{1-\frac {sp} 2}}\\
		&\qquad +
		\int_1^M \left|\int_{|x-y|>\veps} R_t (x-y) \Phi_p(u(x)-u(y))\dd y \right|\frac{\diff t}{t^{1-\frac {sp} 2}}\\
		&\leq \frac{2}{sp}\|g_\veps(x,\cdot)\|_{L^\infty(\R^n)}+
		C
		\int_1^\infty\left( \int_{|z|>\veps} R_t (z) |z|^p \dd z\right)\frac{\diff t}{t^{1-\frac {sp} 2}}.
	\end{align*}
	We use now the explicit form of $R_t$ to get
	\begin{align*}
		&\int_1^\infty\left( \int_{|z|>\veps} R_t (z) |z|^p \dd z\right)\frac{\diff t}{t^{1-\frac {sp} 2}}\\
		&= \int_1^\infty\left( \int_{|z|>\veps} t^{\frac{n}{2}}W(t^{\frac{1}{2}}|z|) |z|^p \dd z\right)\frac{\diff t}{t^{1-\frac {sp} 2}} \\
		&=\int_1^\infty  \left( \int_{|y|>\veps t^{\frac{1}{2}}} W(|y|) |y|^pt^{-\frac{p}{2}} \dd y\right)\frac{\diff t}{t^{1-\frac {sp} 2}}\\
		&\leq  \left( \int_{|y|>\veps} W(|y|) |y|^p \dd y\right) \left(\int_1^\infty  \frac{\dd t}{ t^{1+\frac {(1-s)p} {2}}} \right).
	\end{align*}
	It is rather standard to check that the last two integrals are finite. Thus, \eqref{eq:limitMBalak} is proved for $p\geq2$. When $p\in(1,2)$ we use \Cref{lem:tech3} and \Cref{lem:tech2} instead and the proof follows in the same way.

	\smallskip
	
	\textbf{Step 2 (b):} We proceed now by direct computations:
	\begin{align}\label{eq:BalaklimM}
		&\int_0^M \Delta (t-\Delta)^{-1} [g_\veps(x,\cdot)](x) \frac{\diff t}{t^{1-\frac {sp} 2}} \\
		&=
		\int_0^M \int_{|x-y|>\veps} R_t (x-y) \Phi_p(u(x)-u(y))\dd y \frac{\diff t}{t^{1-\frac {sp} 2}}\nonumber\\
		&=\int_{|x-y|>\veps}  \Phi_p(u(x)-u(y)) \left( \int_0^M  R_t (x-y) \frac{\diff t}{t^{1-\frac {sp} 2}}\right) \dd y\\
		&=\int_{|x-y|>\veps}  \frac{\Phi_p(u(x)-u(y))}{|x-y|^{n+sp}} \left( \int_0^{M|x-y|^2}  W(\tau^{\frac{1}{2}}) \frac{\diff \tau}{(\tau^{\frac{1}{2}})^{2-n- {sp} }}\right) \dd y.\nonumber
	\end{align}
	For any $y \ne x$
	\[
	\int_0^{M|x-y|^2}  W(\tau^{\frac{1}{2}}) \frac{\diff \tau}{(\tau^{\frac{1}{2}})^{2-n- {sp} }} \stackrel{M\to \infty}{\longrightarrow}  \int_0^{\infty}  W(\tau^{\frac{1}{2}}) \frac{\diff \tau}{(\tau^{\frac{1}{2}})^{2-n- {sp} }}=:C.
	\]
	We just need to check that $C$ is finite which will ensure we can pass to the limit in the last term of \eqref{eq:BalaklimM} (we can also do it in the first by Step 2(a)) to get \eqref{eq:Balakidentity1} by the Dominated Convergence Theorem since
	\[
	\begin{aligned}
	&\int_{|x-y|>\veps}   \frac{|\Phi_p(u(x)-u(y))|}{|x-y|^{n+sp}} \left( \int_0^{M|x-y|^2}  W(\tau^{\frac{1}{2}}) \frac{\diff t}{(\tau^{\frac{1}{2}})^{2-n- {sp} }}\right) \dd y \\
	&\qquad \leq C \int_{|x-y|>\veps}   \frac{|\Phi_p(u(x)-u(y))|}{|x-y|^{n+sp}}  \dd y<+\infty.
	\end{aligned}
	\]
	
	Let us now compute $C$ explicitly:
	\begin{align*}
		C&=\int_0^{\infty} \left( \frac{\tau^{1-\frac{n}{2}}}{(4\pi)^{\frac{n}{2}}}\int_0^\infty e^{-\tau w} w^{-\frac{n}{2}} e^{-\frac{1}{4w}} \dd w \right)  \frac{\diff \tau}{(\tau^{\frac{1}{2}})^{2-n- {sp} }}\\
		&= \frac{1}{(4\pi)^{\frac{n}{2}}}\int_0^\infty  w^{-\frac{n}{2}} e^{-\frac{1}{4w}}  \left(\int_0^{\infty}e^{-\tau w} \tau^{\frac{sp}{2}}   \diff \tau \right)\dd w \\
		&= \frac{1}{(4\pi)^{\frac{n}{2}}}\int_0^\infty  w^{-1-\frac{n}{2}-\frac{sp}{2}} e^{-\frac{1}{4w}}  \left(\int_0^{\infty}e^{-\eta } \eta^{\frac{sp}{2}}   \diff \eta \right)\dd w\\
		&=\Gamma\left (1+\tfrac{sp}{2}\right)  \frac{1}{(4\pi)^{\frac{n}{2}}}\int_0^\infty  w^{-1-\frac{n}{2}-\frac{sp}{2}} e^{-\frac{1}{4w}} \dd w \\
		&= \Gamma\left(1+\tfrac{sp}{2}\right)  \frac{4^{\frac{sp}{2}}}{\pi^{\frac{n}{2}}}\int_0^\infty  \rho^{-1+\frac{n}{2}+\frac{sp}{2}} e^{-\rho} \dd \rho \\
		&=  \frac{2^{sp}}{\pi^{\frac{n}{2}}}  \Gamma\left(1+\tfrac{sp}{2}\right) \Gamma\left(\tfrac{n+sp}{2}\right).
	\end{align*}
	Clearly $C<+\infty$, which finishes the proof of Step 2.
	
	\textbf{Step 3.} Lastly, we pass to the limit as $\veps\to0$ in \eqref{eq:Balakidentity1}. The limit of the right hand side term was already shown in Step 1 of the proof of \Cref{thm:semigroup}. For the left hand side,
	when $p\geq2$, we have by \Cref{lem:tech1} (we can proceed analogously with \Cref{lem:tech3} and \Cref{lem:tech2} when $p\in(1,2)$)
	we compute directly
	\begin{align*}
		&\Bigg|\int_0^\infty \Delta (t-\Delta)^{-1} [g_\veps(x,\cdot)](x) \frac{\diff t}{t^{1-\frac {sp} 2}}\\
		&\qquad - \int_0^\infty \Delta (t-\Delta)^{-1} [\Phi_p(u(x)-u(\cdot))](x) \frac{\diff t}{t^{1-\frac {sp} 2}}\Bigg|\\
		&\leq
		\int_0^\infty \left| \int_{|x-y|<\veps} \Phi_p(u(x)-u(y)) R_t(x-y) \dd y  \right| \frac{\diff t}{t^{1-\frac {sp} 2}}\\
		&\leq 2 \Phi_p(\|u\|_{L^\infty(\R^n)})\int_0^1 \left( \int_{|z|<\veps}  R_t(z) \dd z  \right) \frac{\diff t}{t^{1-\frac {sp} 2}} \\
		&\qquad +
		C
		\int_1^\infty \left( \int_{|z|<\veps}  R_t(z)|z|^p \dd z  \right) \frac{\diff t}{t^{1-\frac {sp} 2}}.
	\end{align*}
	Clearly the left integral in the last equation goes to $0$ as $\veps\to0$ since $f_\veps(t):= \int_{|z|<\veps}  R_t(z) \dd z  \to0$ pointwise as $\veps \to0$ due to the fact that $\|R_t\|_{L^1(\R^n)}=1$. For the right integral, we have
	\[
	\int_1^\infty \left( \int_{|z|<\veps}  R_t(z)|z|^p \dd z  \right) \frac{\diff t}{t^{1-\frac {sp} 2}}=  \int_1^\infty  \left( \int_{|y|<\veps t^{\frac{1}{2}}}  W(|y|) |y|^p \dd y\right) \frac{\dd t}{t^{1+\frac {(1-s)p} {2}}}
	\]
	which again goes to 0 as $\veps \to 0$ since $ \int_{|y|<\veps t^{\frac{1}{2}}}  W(|y|) |y|^p \dd y \to0$ a.e.\ in $t$ as $\veps \to0$. This finishes the proof.
\end{proof}

\section{The choice of a suitable constant}\label{sec:constants}

The choice of the constant $C_1>0$ in \eqref{eq.spl1} is indifferent in most cases and can be put to 1, but a precise value $C_1(n,s,p)$ can be important in writing some exact formulas or in the limit cases $s\to 0^+$, $s\to 1^-$, or $p\to 2$. Thus, in the linear case $p=2$ the natural value is fixed by the assumption that the Fourier transform of the operator is a power multiplier,
$\mathcal F [(-\Delta)^s u] = |\xi|^{2s} \mathcal F[u]$, cf. \cite{Stein1970}.
Working out the details, the literature points out the value
\begin{equation}\label{const.p2}
	c_{n,s} = \frac{2^{2s} \Gamma \left( \frac{n+2s}{2} \right)  }{ \pi^{\frac n 2} |\Gamma (-s)| }=
	s(1-s)\frac{2^{2s} \Gamma \left( \frac{n+2s}{2} \right)  }{ \pi^{\frac n 2} \Gamma (2-s) }.
\end{equation}
However, this choice may not seem to be complete when $p\not=2$. One might expect the choice of $C_1$ to depend on $p$ in such a way that the expected limits are recovered:
\begin{equation}
	\label{eq:limits}
	\begin{aligned}
	\fpl \phi &\stackrel{s\to 1^-}{\longrightarrow} -\Delta_p\phi(x)\quad \textup{for all } p\in[2,\infty) \\
	\fpl \phi &\stackrel{p\to2^+}{\longrightarrow} (-\Delta)^s\phi(x) \quad \textup{for all } s\in(0,1).
	\end{aligned}
\end{equation}
The choice \eqref{const.p2} satisfies the second limit, but not the first one. We propose the following constant.
\begin{lemma}\label{lem:constant}
	Consider the constant
	\[
	C_1(n,s,p)=\frac{\frac{sp}{2}(1-s)2^{2s-1}}{\pi^{\frac{n-1}{2}}} \frac{\Gamma(\frac{n+sp}{2})}{\Gamma{(\frac{p+1}{2})}\Gamma(2-s)}
	\]
	in \eqref{eq.spl1}. Then, for $\phi \in C^2_b(\R^n)$  we have that \eqref{eq:limits} holds.
\end{lemma}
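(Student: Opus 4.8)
The plan is to prove the two convergences in \eqref{eq:limits} separately. In both cases it is convenient to write the principal value in symmetrized form, pairing $y$ with $2x-y$:
\begin{equation*}
	\fpl\phi(x)=\frac{C_1(n,s,p)}{2}\int_{\Rd}\frac{\Phi_p\big(\phi(x)-\phi(x+z)\big)+\Phi_p\big(\phi(x)-\phi(x-z)\big)}{|z|^{n+sp}}\,\diff z .
\end{equation*}
Since $\phi\in C^2_b$ and $p\ge2$, the odd-in-$z$ first-order contributions cancel and the symmetrized numerator is $O(|z|^p)$ as $z\to0$, so this is an absolutely convergent integral; moreover $p\ge2>\tfrac{2}{2-s}$ for $s\in(0,1)$, so the hypothesis $\nabla\phi\ne0$ never enters this lemma.

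For the limit $p\to2^+$, one first checks by elementary $\Gamma$-function identities (using $\Gamma(\tfrac{3}{2})=\tfrac{\sqrt\pi}{2}$ and $|\Gamma(-s)|=\Gamma(2-s)/(s(1-s))$) that $C_1(n,s,2)$ coincides with the constant $c_{n,s}$ of \eqref{const.p2}, the one for which $(-\Delta)^s$ has Fourier symbol $|\xi|^{2s}$. Since $C_1(n,s,p)$ depends continuously on $p$, it then suffices to pass to the limit inside the integral, which is justified by dominated convergence: for each fixed $z\ne0$ the symmetrized integrand converges pointwise (because $\Phi_p(t)\to t$ and $|z|^{-n-sp}\to|z|^{-n-2s}$), while for $p$ in a compact right-neighbourhood of $2$ it is dominated by $C\min\{1,|z|^{2(1-s)}\}\,|z|^{-n-sp}$ with $C=C(\|\phi\|_{C^2})$, which lies in $L^1(\Rd)$. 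Hence $\fpl\phi(x)\to(-\Delta)^s\phi(x)$.

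For the limit $s\to1^-$, split the integral at $|z|=1$. The part over $|z|>1$ is bounded by $2(2\|\phi\|_\infty)^{p-1}\int_{|z|>1}|z|^{-n-sp}\,\diff z\lesssim1$, and since $C_1(n,s,p)=O(1-s)$ this contribution vanishes in the limit. On $|z|<1$ we expand $\phi$ to second order and write the increments of the nonlinearity in the integral form $\Phi_p(a)-\Phi_p(b)=(a-b)\int_0^1\Phi_p'(b+t(a-b))\,\diff t$ — this sidesteps the fact that $\Phi_p''$ is singular at $0$ when $2<p<3$, which would otherwise be delicate near the hyperplane $\{\nabla\phi(x)\cdot z=0\}$ — obtaining
\begin{equation*}
	\Phi_p\big(\phi(x)-\phi(x+z)\big)+\Phi_p\big(\phi(x)-\phi(x-z)\big)=-(p-1)\,|\nabla\phi(x)\cdot z|^{p-2}\,D^2\phi(x)[z,z]+\mathcal E(z),
\end{equation*}
where $|\mathcal E(z)|\lesssim\varpi(|z|)\,|z|^{p}+|z|^{p+\gamma}$ for some $\gamma=\gamma(p)\ge0$, with $\varpi$ the modulus of continuity of $D^2\phi$. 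Passing to polar coordinates $z=r\omega$ and using $\int_0^1r^{p-1-sp}\,\diff r=\tfrac{1}{p(1-s)}$ together with $\lim_{s\to1^-}C_1(n,s,p)/(1-s)=\tfrac{p\,\Gamma((n+p)/2)}{\pi^{(n-1)/2}\Gamma((p+1)/2)}$, the factor $1-s$ cancels and the leading term tends to
\begin{equation*}
	-\frac{(p-1)\,\Gamma\big(\tfrac{n+p}{2}\big)}{2\,\pi^{(n-1)/2}\,\Gamma\big(\tfrac{p+1}{2}\big)}\int_{S^{n-1}}|\nabla\phi(x)\cdot\omega|^{p-2}\,D^2\phi(x)[\omega,\omega]\,\diff\omega .
\end{equation*}
In coordinates with $\nabla\phi(x)$ along $e_1$, the parity relations $\int_{S^{n-1}}|\omega_1|^{p-2}\omega_i\omega_j\,\diff\omega=0$ ($i\ne j$) and the value $\int_{S^{n-1}}|\omega_1|^{q}\,\diff\omega=\tfrac{2\pi^{(n-1)/2}\Gamma((q+1)/2)}{\Gamma((n+q)/2)}$ give $\int_{S^{n-1}}|\nabla\phi(x)\cdot\omega|^{p-2}D^2\phi(x)[\omega,\omega]\,\diff\omega=\tfrac{2\pi^{(n-1)/2}\Gamma((p+1)/2)}{(p-1)\Gamma((n+p)/2)}\,\Delta_p\phi(x)$, with $\Delta_p\phi=\diver(|\nabla\phi|^{p-2}\nabla\phi)$; substituting, the previous display collapses exactly to $-\Delta_p\phi(x)$, which is the property for which $C_1(n,s,p)$ has been calibrated. (The case $p=2$ is the classical linear one, recovered by $\Phi_2(t)=t$.)

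It remains to see the error is negligible, i.e.\ $C_1(n,s,p)\int_{|z|<1}|\mathcal E(z)|\,|z|^{-n-sp}\,\diff z\to0$. The $|z|^{p+\gamma}$ part is $\lesssim C_1\int_0^1r^{p+\gamma-1-sp}\,\diff r$, whose exponent tends to $\gamma>-1$ as $s\to1^-$, so the integral stays bounded and the product is $O(1-s)\to0$. For the $\varpi(|z|)|z|^p$ part, split $\int_0^1=\int_0^\eta+\int_\eta^1$: over $[\eta,1]$ the integral is $\le C\ln(1/\eta)$ uniformly for $s$ near $1$, so multiplied by $C_1=O(1-s)$ it vanishes; over $[0,\eta]$ it is $\le\big(\sup_{[0,\eta]}\varpi\big)\tfrac{C}{p(1-s)}$, so multiplied by $C_1$ it is $\le C\sup_{[0,\eta]}\varpi$, which is arbitrarily small for $\eta$ small. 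This is the delicate point of the argument: controlling the Taylor remainder \emph{uniformly as $s\to1^-$} with only $C^2_b$ regularity, which is precisely what forces both the two-scale splitting and the use of the integral increment formula for $\Phi_p$ rather than a second-order expansion of the nonlinearity.
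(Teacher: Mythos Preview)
Your argument is essentially correct and takes a genuinely different, more self-contained route than the paper. The paper's proof of the limit $s\to 1^-$ simply invokes \cite[Theorem~2.8]{Bucur2020} for the identity
\[
(1-s)\,\PV\int_{\Rd}\frac{\Phi_p(\phi(x)-\phi(y))}{|x-y|^{n+sp}}\,\diff y \ \longrightarrow\ -\frac{\pi^{(n-1)/2}}{p}\,\frac{\Gamma(\frac{p+1}{2})}{\Gamma(\frac{n+p}{2})}\,\Delta_p\phi(x),
\]
handles the case $\nabla\phi(x)=0$ by a separate one-line remark, and then matches constants. You instead prove this asymptotic from scratch: symmetrize, Taylor-expand $\phi$, extract the leading contribution through the first-order increment formula $\Phi_p(a)-\Phi_p(b)=(a-b)\int_0^1\Phi_p'(b+t(a-b))\,\diff t$, evaluate the spherical integral explicitly, and control the remainder by a two-scale split at $|z|=\eta$. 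Your use of the integral increment formula is a neat way to avoid the singularity of $\Phi_p''$ at the origin when $2<p<3$; the paper's cited reference ultimately relies on the same pointwise expansion. What the paper's approach buys is brevity; what yours buys is independence from an external source and a transparent accounting of where the constant comes from.

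Three small points deserve tightening. First, the dominating function you write for the $p\to 2^+$ limit, $C\min\{1,|z|^{2(1-s)}\}\,|z|^{-n-sp}$, is not in $L^1$ near the origin when $s>\tfrac12$; the correct majorant is $C\min\{1,|z|^2\}\,|z|^{-n-sP}$ for any fixed $P\in(2,2/s)$ containing the range of $p$, which is integrable since $2-sP>0$. Second, the exponent $p+\gamma-1-sp$ tends to $\gamma-1$ (not $\gamma$) as $s\to 1^-$, so the boundedness of $\int_0^1 r^{p+\gamma-1-sp}\,\diff r$ requires $\gamma>0$; this does hold for $p>2$ (indeed $\gamma=\min\{1,p-2\}$), but your stated $\gamma\ge 0$ would not suffice. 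Third, the spherical-integral identity is derived in coordinates with $\nabla\phi(x)$ along $e_1$, hence assumes $\nabla\phi(x)\ne0$; when $\nabla\phi(x)=0$ and $p>2$ both the leading term and $\Delta_p\phi(x)$ vanish, and your error bound already covers this case, but a sentence saying so would complete the argument.
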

\begin{proof}
	Consider the operator
	\[
	\mathcal{L}^{s}_p[\phi](x)= \PV \int_{\mathbb R^d} \frac{\Phi_p (u(x) - u(y))}{|x-y|^{n+sp}} \diff y.
	\]
	We will use the following limit
	\begin{equation}\label{eq:lim_s}
		(1-s) \mathcal{L}^s_p[\phi](x)\stackrel{s\to 1^-}{\longrightarrow} - \frac{\pi^{\frac{n-1}{2}}}{p} \frac{\Gamma(\frac{p+1}{2})}{\Gamma(\frac{n+p}{2})}  \Delta_p\phi(x).
	\end{equation}
	When $\nabla\phi(x)\not=0$, \eqref{eq:lim_s} is precisely \cite[Theorem 2.8]{Bucur2020}. The situation when  $\nabla \phi(x)=0$ is even simpler. The case $p=2$ is classical and we omit it (see for example \cite{DiNezza2012}). When $p>2$ we rely on the following intermediate step in the proof of  \cite[Theorem 2.8]{Bucur2020} (see also the proof of \cite[Theorem 6.1]{delTesoLindgren2020}):
	\[
	(1-s) \mathcal{L}^s_p[\phi](x)\stackrel{s\to 1^-}{\longrightarrow} -\frac{p-1}{2p}\int_{\partial B_1(0)} \left|\nabla \phi(x) \cdot y \right|^{p-2} y^{T} D^2\phi(x) y \dd \sigma(y).
	\]
	Clearly the right hand side term in the equation above is zero when $\nabla \phi(x)=0$. Finally, if $\nabla \phi(x)=0$, it is straightforward to see that $-\Delta_p\phi(x)=0$ for $p>2$. This proves \eqref{eq:lim_s}.
	Then,
	\begin{align*}
		&C_1(n,s,p) \mathcal{L}^s_p[\phi](x) = \left(\frac{\frac{sp}{2}2^{2s-1}}{\pi^{\frac{n-1}{2}}} \frac{\Gamma(\frac{n+sp}{2})}{\Gamma{(\frac{p+1}{2})}\Gamma(2-s)} \right) \left((1-s)  \mathcal{L}^s_p[\phi](x) \right)\\
		&\stackrel{s\to 1^-}{\longrightarrow}  \left(\frac{\frac{p}{2}2}{\pi^{\frac{n-1}{2}}} \frac{\Gamma(\frac{n+p}{2})}{\Gamma{(\frac{p+1}{2})}\Gamma(1)} \right) \left(- \frac{\pi^{\frac{n-1}{2}}}{p} \frac{\Gamma(\frac{p+1}{2})}{\Gamma(\frac{n+p}{2})}  \Delta_p\phi(x)\right)=-\Delta_p\phi(x),
	\end{align*}
	where we have used the fact that $\Gamma(1)=1$. This proves the first limit in \eqref{eq:limits}. For the second limit, we just need to check that $C_1(n,s,2)$ coincides with $c_{n,s}$ given in \eqref{const.p2}. This is just a simple computation
	\begin{align*}
		C_1(n,s,2)&=\frac{s(1-s)2^{2s-1}}{\pi^{\frac{n-1}{2}}} \frac{\Gamma(\frac{n+2s}{2})}{\Gamma{(\frac{3}{2})}\Gamma(2-s)}\\
		&= \frac{s(1-s)2^{2s}}{\pi^{\frac{n}{2}}} \frac{\Gamma(\frac{n+2s}{2})}{\Gamma(2-s)}
	\end{align*}
	where we have used the fact that $\Gamma(1/2)=\sqrt{\pi}$ and $z\Gamma(z)=\Gamma(z+1)$. This finished the proof.
\end{proof}

\begin{remark}
	With the choice of $C_1(n,s,p)$ given by \Cref{lem:constant} we have that
	\[
	C_2(s,p):= C_1(n,s,p) \frac {\pi^{ \frac n 2 } }  {2^{sp} \Gamma(\frac{n+sp}2)} =  \pi^{ \frac 1 2 } \frac{p}{\Gamma(\frac{p+1}{2})}\frac{1}{|\Gamma(-s)|} 2^{s(2-p)-2},
	\]
	\[
	C_3(s,p)=C_1(n,s,p)  \frac{ \pi^{\frac n 2} \Gamma (\frac{sp}2)} {\Gamma ( \frac {n + sp}2 )} = \pi^{\frac 1 2} \frac{p}{\Gamma{(\frac{p+1}{2})}} \frac{  \Gamma (\frac{sp}2)} {|\Gamma(-s)|} 2^{2s-2},
	\]
	and
	\begin{align*}
	C_4(s,p)&= C_1(n,s,p) \frac{ \pi^{\frac n 2} } {2^{sp}\Gamma(\frac{2+sp}{2})\Gamma ( \frac {n + sp}2 )} \\
	&= \pi^{\frac{1}{2}}\frac{p}{(\frac{p+1}{2})}  \frac{ 1 } {|\Gamma(-s)|\Gamma(\frac{2+sp}{2})}   2^{s(2-p)-2}.
	\end{align*}
	An important fact is that  these  constants are independent of the dimension. It is also standard to check that
	\[
	\begin{gathered}
		C_2(s,2)=\frac{1}{|\Gamma(-s)|}, \quad  C_3(s,2)=\frac{\Gamma(s)}{|\Gamma(-s)|}4^s,   \\
		\textup{and} \quad C_4(s,2)= \frac{ 1 } {|\Gamma(-s)|\Gamma(1+s)}=\frac{\sin(s\pi)}{\pi}
	\end{gathered}
	\]
	which recover the classical constants for the semigroup  formula and the extension problem (see for example \cite{Kwa17}).
\end{remark}

\section{A new spectral-type fractional $p$-Laplacian on domains}
\label{sec:new spectral}	

There are many well known applications and interesting remarks of the semigroup formula and the extension problem for the fractional Laplacian. After the results that we have shown in this paper, a number of applications can be naturally proposed in the context of the fractional $p$-Laplacian. Here is a first application.

Let $\Omega$ be a smooth domain (bounded or otherwise).
The semigroup representation \eqref{eq:semigintro} motivates the definition of a new operator using formula \eqref{eq:semigdomain}, where we replaced $-\Delta$ by $-\Delta_{\Omega}$, the Laplacian with homogeneous Dirichlet boundary conditions. It is clear that $(-\Delta_\Omega)_p^s$ coincides with $\fpl$ \ if \ $\Omega = \mathbb R^n$, and one can even expect some ``continuous'' dependence on $\Omega$.

This is, to our knowledge,  a new example of nonlocal  operator when $\Omega \neq \mathbb R^n$. We will show below it does not coincide with $\fpl$ applied to the extension of  $u$  by $0$ outside $\Omega$. On the other hand, when $p = 2$ we recover the usual Spectral Fractional Laplacian, which is simply the usual spectral power of the Dirichlet Laplacian in a bounded domain as a compact operator;   we refer the reader to \cite{CaTa10, Stinga2010,MR3360740} for modern references.

First, we show it is well-defined for functions defined exclusively in $\Omega$, at least when they are smooth. This will allow to consider a boundary value problem with boundary conditions in $\partial \Omega$, without taking the set $\mathbb R^n \setminus \Omega$ into account.

\begin{theorem}
	\label{lem:spectral is defined}
	Let  $s \in (0,1)$ and  assume that $u\in C^2(\overline \Omega),$ where $\Omega$ is a smooth subdomain on $\Rd$. If $p \ge \frac{2}{2-s}$, the following  formula
	\begin{equation}\label{eq:semigdomain2}
		(-\Delta_\Omega)_{p}^s u (x_0) :=C_2\int_0^{+\infty} e^{t \Delta_\Omega} [\Phi_p (  u(x_0) - u(\cdot) )](x_0) \frac{\diff t}{t^{1+\frac {sp} 2}}
	\end{equation}
	is well-defined at every $x_0 \in \Omega$. If  $p$ is close to 1, more precisely $p \in (1,\frac{2}{2-s})$, we need to assume additionally that $\nabla u(x_0)\not=0$.
\end{theorem}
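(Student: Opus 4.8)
The plan is to show that the integral in \eqref{eq:semigdomain2} converges absolutely at $x_0\in\Omega$; everything else (finiteness of $e^{t\Delta_\Omega}[g](x_0)$ for each fixed $t>0$ and measurability in $t$) is immediate because $g:=\Phi_p(u(x_0)-u(\cdot))$ is bounded on $\overline\Omega$ and the Dirichlet heat semigroup is an $L^\infty$--contraction, so $|e^{t\Delta_\Omega}[g](x_0)|\le\|g\|_{L^\infty(\Omega)}$. Since $sp/2>0$, this crude bound already makes $\int_1^\infty|e^{t\Delta_\Omega}[g](x_0)|\,t^{-1-\frac{sp}2}\diff t$ finite, so the whole issue is the behaviour as $t\to0^+$, where we must produce a bound $|e^{t\Delta_\Omega}[g](x_0)|\lesssim t^{\gamma}$ with $\gamma>sp/2$. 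We may assume $\Omega\neq\Rd$ (otherwise the statement is just the pointwise well-posedness contained in the proof of \Cref{thm:semigroup}), so that $d_0:=\dist(x_0,\partial\Omega)\in(0,\infty)$.

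The first step is to reduce to the whole-space situation. Extend $u$ to some $\tilde u\in C^2_b(\Rd)$ (possible since $\Omega$ is smooth), put $\tilde g:=\Phi_p(u(x_0)-\tilde u(\cdot))\in L^\infty(\Rd)$, and note that $\tilde g=g$ on $\overline\Omega$ and $\nabla\tilde u(x_0)=\nabla u(x_0)$. Write $G(t,\cdot)$ for the Gaussian kernel and $p_\Omega(t,\cdot,\cdot)$ for the Dirichlet heat kernel of $\Omega$, which enjoys the classical domination $0\le p_\Omega(t,x_0,y)\le G(t,x_0-y)$. Then
\begin{equation*}
	e^{t\Delta_\Omega}[g](x_0)=e^{t\Delta}[\tilde g](x_0)-\mathcal E(t),\qquad
	\mathcal E(t):=\int_{\Rd\setminus\Omega}\!\!G(t,x_0-y)\tilde g(y)\,\diff y+\int_{\Omega}\!\big(G(t,x_0-y)-p_\Omega(t,x_0,y)\big)\tilde g(y)\,\diff y,
\end{equation*}
and since $G(t,x_0-\cdot)-p_\Omega(t,x_0,\cdot)\ge0$ on $\Omega$, a short computation using $\int_{\Rd}G(t,\cdot)=1$ and $\int_\Omega p_\Omega(t,x_0,y)\diff y=e^{t\Delta_\Omega}[\mathbf 1](x_0)$ gives
\begin{equation*}
	|\mathcal E(t)|\le\|\tilde g\|_{L^\infty(\Rd)}\big(1-e^{t\Delta_\Omega}[\mathbf 1](x_0)\big).
\end{equation*}

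It then remains to estimate the two pieces for $t\in(0,1]$. For the first, $|e^{t\Delta}[\tilde g](x_0)|\lesssim t^{\gamma}$ with $\gamma>sp/2$ is exactly what is established in the proof of \Cref{thm:semigroup}: the estimates there (via \Cref{lem:tech1}, \Cref{lem:tech2} and \Cref{lem:tech3}, applied with $K(z)=G(t,z)$ and passing $\veps\to0$) yield $\gamma=p/2$ when $p\ge2$ or when $\nabla\tilde u(x_0)=\nabla u(x_0)\neq0$, and $\gamma=p-1$ when $p\in(\tfrac{2}{2-s},2)$; in every case allowed by the hypotheses one has $\gamma>sp/2$, and this is precisely where the dichotomy $p\ge\tfrac{2}{2-s}$ versus $p<\tfrac{2}{2-s}$ together with $\nabla u(x_0)\neq0$ enters. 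For the second, I would use the monotonicity of Dirichlet heat kernels in the domain: since $B_{d_0}(x_0)\subset\Omega$, we get $e^{t\Delta_\Omega}[\mathbf 1](x_0)\ge e^{t\Delta_{B_{d_0}(x_0)}}[\mathbf 1](x_0)$, and comparing $B_{d_0}(x_0)$ with the inscribed cube of side $2d_0/\sqrt n$ and applying the one-dimensional reflection principle coordinatewise yields $e^{t\Delta_{B_{d_0}(x_0)}}[\mathbf 1](x_0)\ge1-4n\,e^{-d_0^2/(2nt)}$; hence $|\mathcal E(t)|\lesssim e^{-d_0^2/(2nt)}=o(t^N)$ for every $N$. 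Combining, $|e^{t\Delta_\Omega}[g](x_0)|\lesssim t^{\gamma}+e^{-d_0^2/(2nt)}\lesssim t^{\gamma}$ on $(0,1]$, so $\int_0^1|e^{t\Delta_\Omega}[g](x_0)|\,t^{-1-\frac{sp}2}\diff t\lesssim\int_0^1 t^{\gamma-1-\frac{sp}2}\diff t<\infty$, which finishes the proof.

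The main obstacle is that $p_\Omega(t,x_0,\cdot)$ is \emph{not} symmetric about $x_0$, so that the cancellation of the odd, first-order term of $\Phi_p(u(x_0)-u(\cdot))$ — which is what upgrades the exponent from the insufficient $(p-1)/2$ (the best one gets from the pointwise bound $|\Phi_p(u(x_0)-u(y))|\lesssim|x_0-y|^{p-1}$) to the needed $\gamma=p/2$ (or $p-1$) — cannot be invoked directly against the Dirichlet kernel. The decomposition above is exactly the device that circumvents this: it isolates the genuinely symmetric contribution $e^{t\Delta}[\tilde g](x_0)$, already controlled in \Cref{thm:semigroup}, and confines all the asymmetry to a boundary-layer remainder $\mathcal E(t)$ which is exponentially small in $1/t$ because $x_0$ lies in the interior of $\Omega$.
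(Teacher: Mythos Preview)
Your proof is correct and follows essentially the same approach as the paper: both split $e^{t\Delta_\Omega}[g](x_0)$ into the whole-space piece $e^{t\Delta}[\tilde g](x_0)$ (handled by the estimates from the proof of \Cref{thm:semigroup}) plus boundary remainder terms that are exponentially small in $1/t$ because $x_0$ lies in the interior. The only differences are cosmetic: the paper bounds the two remainder terms separately via direct Gaussian estimates and the comparison $0\le K_{\Rd}-K_\Omega\le \max_{z\in\partial\Omega}K_{\Rd}(t,x_0,z)$, whereas you package them as $\|\tilde g\|_{L^\infty}(1-e^{t\Delta_\Omega}[\mathbf 1](x_0))$ and invoke domain monotonicity through an inscribed cube; and for $t\ge1$ the paper uses the spectral-gap decay $e^{-\lambda_1 t}$ while your $L^\infty$-contraction bound already suffices (and is in fact more robust for unbounded $\Omega$).
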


\begin{proof}
	Since $\Omega$ is smooth, we can extend $u$ to a $C_c^2(\mathbb R^n)$, via the tubular neighbourhood theorem. We will still denote the extended function by $u$. 		The time integral in  formula \eqref{eq:semigdomain2} could be singular at both ends, $t = 0, \infty$, hence we must show the convergence of the integral at either end. 		The integral over the interval $[1,\infty)$ is not a problem, since $\| e^{t \Delta_\Omega} f \|_{L^\infty (\Omega)} \le C e^{-\lambda_1 t} \| f \|_{L^\infty (\Omega)}$ where $\lambda_1$ is the first eigenvalue of the Dirichlet Laplacian.
	
	Let us show that there exists
	\begin{equation*}
		\lim_{m \to \infty} \int_{1/m}^{1} e^{t \Delta_\Omega} [\Phi_p (  u(x) - u(\cdot) )](x) \frac{\diff t}{t^{1+\frac {sp} 2}}.
	\end{equation*}
	We do this by following the proof done above with $\fpl$, with suitable modifications.
	For convenience, we write the semigroup $e^{t \Delta_{\Omega}}$ in its kernel form $K_\Omega$, and compare with the heat kernel of the whole space $K_{\mathbb R^n}$. We define $g(x,y) = \Phi_p (  u(x) - u(y) )$. Then,
	\begin{equation*}
		e^{t \Delta_\Omega} [\Phi_p (  u(x) - u(\cdot) )](x) = \int_\Omega K_\Omega(t,x,y) g(x,y) \diff y  .
	\end{equation*}
	We write
	\begin{equation}
		\label{eq:sp lap in domain decomposition}
		\begin{aligned}
			&\int_{1/m}^1 \int_\Omega K_\Omega(t,x,y) g(x,y) \diff y  \frac{\diff t}{t^{1+\frac {sp} 2}}\\
			&=\int_{1/m}^1 \int_{\mathbb R^n} K_{\mathbb R^n}(t,x,y) g(x,y) \diff y  \frac{\diff t}{t^{1+\frac {sp} 2}} \\
			&\qquad - \int_{1/m}^1  \int_{\mathbb R^n \setminus \Omega}  K_{\mathbb R^n}(t,x,y)   g(x,y) \diff y  \frac{\diff t}{t^{1+\frac {sp} 2}} \\
			&\qquad - \int_{1/m}^1 \int_\Omega ( K_{\mathbb R^n}(t,x,y)  - K_\Omega (t,x,y)) g(x,y) \diff y  \frac{\diff t}{t^{1+\frac {sp} 2}} .
		\end{aligned}
	\end{equation}
	For the first term on the right-hand side we can compute the limit, which is well-defined by our knowledge of $\fpl$, notice that
	by \eqref{eq:convergence semigroup} in the proof \Cref{thm:semigroup} we have
	\begin{equation*}
		\fpl u(x) = \lim_{m \to \infty} \int_{1/m} ^\infty \int_{\mathbb R^n } K_{\mathbb R^n} (t,x,y) g(x,y) \diff y \frac{\diff t}{t^{1+\frac {sp} 2}}.
	\end{equation*}
	For the other two terms in \eqref{eq:sp lap in domain decomposition}, we will show  the convergence by checking they are Cauchy sequences. To this effect, we write for $k > m > 0$ that
	\begin{equation*}
		\int_{1/m}^1 f(t) \diff t -  	\int_{1/k}^1 f(t) \diff t = \int_{1/m}^{1/k} f(t) \diff t.
	\end{equation*}
	We need to show that the two remaining terms give Cauchy sequences, so we take $ k > m > 0$.
	The second term is easy to estimate
	\begin{align*}
		&\left| \int_{1/k}^{1/m} \int_{\mathbb R^n \setminus \Omega} K_{\mathbb R^n} (t,x,y) g(x,y)\dd y \frac{\diff t}{t^{1+\frac {sp} 2}} \right|  \\
		& \le \| g(x, \cdot) \|_{L^\infty (\Omega)} \int_{1/k}^{1/m}  \frac{1}{(4\pi t)^{\frac n 2} }  \int_{\dist (x, \partial \Omega)}^\infty e^{-\frac{r^2}{4t}} r^{n-1}\diff r \frac{\diff t}{t^{1+\frac {sp} 2}} \\
		&=  \| g(x, \cdot) \|_{L^\infty (\Omega)}  \int_{1/k}^{1/m} \int_{C / t }^\infty e^{-1/\rho} p(\rho)  \diff \rho \, q(t) \diff t \\
		&\le C \| g(x, \cdot) \|_{L^\infty (\Omega)}  \int_{0}^{1/m} e^{-D/t} q(t) \diff t\,,
	\end{align*}
	where $p$ and $q$ are bounded by positive and negative powers and $C, D > 0$. Since $e^{-D t} q(t) \in L^1(0,1)$, the second term of the right-hand side of \eqref{eq:sp lap in domain decomposition} is a Cauchy sequence.
	
	For the last term of \eqref{eq:sp lap in domain decomposition}, using the comparison principle we have that
	\begin{equation*}
		0 \le  K_{\mathbb R^n} (t,x,y) - K_\Omega (t,x,y) \le \max_{z \in \partial \Omega}  K_{\mathbb R^n} (t,x,z) = \frac{1}{(4 \pi t)^{\frac n 2} }e^{- \frac{\dist(x, \partial \Omega)^2}{4t} }.
	\end{equation*}
	Thus,
	\begin{align*}
		&\int_{1/k}^{1/m} \left| \int_{\Omega} ( K_{\mathbb R^n} (t,x,y) - K_\Omega (t,x,y) ) g(x,y) dy \right| \frac{\diff t}{t^{1+\frac {sp} 2}} \\
		&\le \| g (x, \cdot) \|_{L^\infty (\Omega) } \int_{0}^{1/m} \frac{1}{(4 \pi t)^{\frac n 2} }e^{- \frac{\dist(x, \partial \Omega)^2}{4t} }\frac{\diff t}{t^{1+\frac {sp} 2}} .
	\end{align*}
	Hence, the last term of \eqref{eq:sp lap in domain decomposition} is also a Cauchy sequence.
\end{proof}

\begin{remark}
	This operator is indeed different from the usual one on domains, which uses the formula of $\fpl$ restricted to functions $u$ defined in $\Rd$ such that $u = 0$ outside $\Omega$ (see, for example, \cite{Mazon2016,Vazquez2016}). Let $\Omega$ be fixed and assume without loss of generality that $0 \in \Omega$. Let $B_R (0) \subset \Omega$ and $u$ a smooth function that is positive,  radially symmetric, and strictly decreasing along the radius and $0$ outside of $B_R$. Then, with the notation of the proof of \Cref{lem:spectral is defined}
	\begin{align*}
		&\fpl u(0) - (-\Delta_\Omega)_p^s u(0) \\
		&\quad = \int_0^\infty \int_\Omega (K_{\mathbb R^n} (t,0,y) - K_\Omega (t,0,y)) \Phi_p(u(0)-u(y)) \diff y \frac{\diff t}{t^{1+\frac {sp} 2}} \\
		&\qquad + \int_0^\infty \int_{\mathbb R^n \setminus \Omega} K_{\mathbb R^n} (t,x,y)  \Phi_p(u(0)-u(y)) \diff y \frac{\diff t}{t^{1+\frac {sp} 2}}     		> 0.
	\end{align*}
	This happens because both integrands are nonnegative functions, and $K_{\mathbb R^n} (t,x,y) - K_\Omega (t,x,y)\geq0$ for all $x,y\in\Omega$.
\end{remark}

\begin{remark}
	We can similarly define operators via the extension problem and Balakrishnan's formulas. For example,
	\begin{equation*}
		(-\Delta_\Omega)_{p}^s u (x_0) = C_4 \int_0^\infty \Delta (t-\Delta_\Omega)^{-1} [\Phi_p (  u(x_0) - u(\cdot) )](x_0) \frac{\diff t}{t^{1-\frac {sp} 2}}.
	\end{equation*}
	It is an interesting open problem to check whether these definitions are equivalent. Another formula can be written via the extension.
\end{remark}

\section{Numerical applications}\label{sec:numApp}

Among the many possible applications of our representations, we want to comment next on the construction of finite-difference methods for the fractional $p$-Laplacian. We recall that monotone finite-difference discretizations for the fractional Laplacian (case $p=2$) have been established in the papers  \cite{Ci-etal18} (in $\R$) and \cite{DelTeso2018} (in $\R^n$) based on the semigroup representation. The advantage of  these types of discretizations is that they have a quadratic  error order, independently of the fractional power of the operator (see \cite{DelTeso2018}). This is a great advantage compared to other finite difference discretizations in the literature (see discussion in \cite{DelTeso2018, DelTeso2019}).

Similar ideas could be used in the case of the fractional $p$-Laplacian using the results of \Cref{thm:semigroup}. 	
Let us explain the process at a formal level.
Let  $h>0$ and $\{e_i\}_{i=i}^n$ be standard basis of $\R^n$. Consider the discrete Laplacian given by
\begin{equation}\label{eq:disclap}
	\Delta_h [u](x) = \sum_{i=1}^n \frac{u(x+he_i) + u(x-he_i) - 2u(x)}{h^2}.
\end{equation}
The already-classical theory shows that this is an ``asymptotically good'' approximation of $\Delta$ (in the sense that it produces convergent and stable schemes in many settings).
In accordance to our representation ideas, we are led to define the following discrete operator
\begin{equation}\label{eq:discplap}
	(-\Delta_h)_p^s [u] (x) :=  C_2  \int_0^\infty e^{t \Delta_h} [\Phi_p (  u(x) - u(\cdot))](x) \frac{\diff t}{t^{1+\frac {sp} 2}},
\end{equation}
by simply replacing $\Delta$ by $\Delta_h$ in \eqref{eq:semigintro}. In this setting, $e^{t \Delta_h} [f]$ represents the uniquely-defined solution of $\partial_t w = \Delta_h w$ such that $w(x,0) = f(x)$.
We will call it the \emph{discrete fractional $p$-Laplacian}.

\bigskip

At least formally, we have an alternative representation of $(-\Delta_h)_p^s$ given in a finite-difference form similar to \eqref{eq:disclap}.
We follow here the presentation of \cite{DelTeso2018}. Let $z_\beta := \beta h$ for $\beta:=(\beta_1,\ldots,\beta_n) \in \Z^n$ and $\psi:\R^n \to \R$. We have that
\[
e^{t \Delta_{ h}}[\psi](x) = \sum_{\beta\in \Z^n} \psi(x+z_\beta) G(\beta, \tfrac{t}{h^2})
\]
where $G(\beta,t)=e^{-2nt} \Pi_{i=1}^n I_{|\beta_i|}(2t)$ and $I_m$ is the modified Bessel function of first kind with order $m\in \N$. From \eqref{eq:discplap} we get, at least formally,
\begin{align*}
	(-\Delta_h)_p^s u(x)&=C_2  \int_0^\infty \left(  \sum_{\beta\not=0} \Phi_p \left(  u(x) - u(x+z_\beta) \right)G(\beta, \tfrac{t}{h^2}) \right) \frac{\diff t}{t^{1+\frac {sp} 2}}\\
	&= \sum_{\beta\not=0} \Phi_p \left(  u(x) - u(x+z_\beta) \right) C_2 \int_0^\infty G(\beta, \tfrac{t}{h^2}) \frac{\diff t}{t^{1+\frac {sp} 2}}.
\end{align*}
Finally, the change of variables $\tau=t/h^2$ allows to write
\begin{equation}\label{eq:FDpLap}
	\begin{aligned}
	&(-\Delta_h)_p^s u(x)= \sum_{\beta\not=0} \Phi_p \left(  u(x) - u(x+z_\beta) \right) K_{\beta,h} \\
	&\qquad  \textup{with} \quad K_{\beta,h} = \frac{C_2}{h^{sp}} \int_0^\infty G(\beta,t) \frac{\dd t}{t^{1+\frac{sp}{2}}}.
	\end{aligned}
\end{equation}
This process is rigorous when $sp<2$. However, the weights $K_{\beta,h}$ are not well defined when $sp\geq2$:  for example when $n=1$, we note that
\[
G(1,t)=e^{-2nt} I_{1}(2t) \sim t \quad \textup{for} \quad t\ll 1,
\]
which is only enough to show the convergence of the integral in the mentioned range.  However, when $sp\geq2$, it is easy to show that formula \eqref{eq:discplap} fails to have a finite value at a point $x$ of global strict maximum of $u$. 			
An idea to solve this problem is introducing an extra discretization parameter $\delta >0$ and alternatively define
\begin{align}\label{eq:discFPL2}
	(-\Delta_{h,\delta})_p^s:=&  C_2  \int_\delta ^\infty\left(e^{t \Delta} [\Phi_p (  u(x) - u(\cdot) \right)](x) \frac{\diff t}{t^{1+\frac {sp} 2}}\\\nonumber
	=& \sum_{\beta\not=0} \Phi_p \left(  u(x) - u(x+z_\beta) \right) K_{\beta,h,\delta }, \qquad \qquad \\
	\textup{with}  & \quad K_{\beta,h,\delta } = \frac{C_2}{h^{sp}} \int_{\delta h^2}^\infty G_{ h}(\beta,t) \frac{\dd t}{t^{1+\frac{sp}{2}}}.
\end{align}
In this way, all the integral are finite and all the steps are well justified rigorously in the whole range $p\in (1,\infty)$ and $s\in(0,1)$. One should make $\delta \to 0$ as $h \to 0$ to show that the discretization is consistent in some proper sense.

\begin{remark}
	A different finite difference discretization of the from of \eqref{eq:FDpLap} for nonlocal $p$-Laplacian type operators (in the sense of \cite{Andreu2010})  has been obtained in \cite{Perez11}. They are based on obtaining quadrature rules for a singular integral definition like \eqref{eq.frlap}.  In both cases the obtained discretizations are monotone, which yields good properties of the associated schemes.
\end{remark}

\begin{remark}
	Balakrishnan's formula has been the starting point for one of the most popular numerical schemes for the fractional Laplacian. The scheme combines a quadrature for the time integral and a numerical scheme for a sequence of the spatial operators $\Delta(t-\Delta)^{-1}$. One of the main advantages of this idea is the fact that, contrary to others, it is suited for parallel computing. We refer to \cite{BoPa15} as the basic reference to the topic.
\end{remark}

\begin{remark}
	The extension problem has also been successfully used by the numerical analysis community in the context of the fractional Laplacian. We refer the reader to \cite{Teso14} for a finite difference treatment in the context of nonlocal porous medium equations and to \cite{NoOtSa15} for a finite element one in the context of boundary value problems.
\end{remark}

\section{Other applications}\label{sec:app}

We collect here a series of directions in which our previous ideas can be developed.

\subsection{Equivalent representations of the $W^{s,p}(\R^n)$ seminorm}\label{ssec.8.1}
We can define an equivalent way of defining the $(s,p)$-energy \eqref{Jsp1}. More precisely, consider the Gagliardo semi-norm
\[
[u]_{W^{s,p}(\R^n)}:= \left(C_1\int_{{\mathbb R^d}}\int_{{\mathbb R^d} } \frac{|u(x)-u(y)|^p}{|x-y|^{N+sp}} \dd x \dd y \right)^{\frac{1}{p}}.
\]
It is standard to check that we have the following alternative definition
\[
[u]_{W^{s,p}(\R^n)}= \left(C_2\int_{\R^n}\int_0^{+\infty} e^{t \Delta} [ |u(x) - u(\cdot)|^p ](x) \frac{\diff t}{t^{1+\frac {sp} 2}}\dd x\right)^{\frac{1}{p}}.
\]
This formulation was introduced in \cite{Tai64}. We also refer to \cite{Garofalo2020} for a modern reference on the topic in the context of Carnot groups.

The corresponding version with Balakrishnan's formula, is
\[
[u]_{W^{s,p}(\R^n)}=\left(C_4\int_{\R^n} \int_0^\infty \Delta (t-\Delta)^{-1} [|u(x) - u(\cdot)|^p](x) \frac{\diff t}{t^{1-\frac {sp} 2}} \dd x\right)^{\frac{1}{p}}.
\]
We have not found this expression in the literature.
A similar formula can be constructed via the extension, but it comes as an awkward product of gradients.

\subsection{Spectral $(s,p)$-Laplacian on manifolds}

Let $(M,g)$ be a Riemannian manifold. Some authors (see, e.g. \cite{Guo2018}) have used the $(s,p)$-fractional Laplacian on manifolds given by
\begin{equation*}
	(-\Delta)_{M,p}^s u(x) = C_3(n,s,p) \PV \int_{M} \frac{\Phi_p (u(x) - u(y))}{d_g(x,y)^{n+sp}} \diff y,
\end{equation*}
where $g$ is a Riemannian metric on $M$. This a regional-type operator.
Following our approach for bounded domain, we propose also the spectral-type operator
\begin{equation*}
	(-\Delta_M)_p^s u (x) =
	C_1 (n,s,p)
	\int_0^{+\infty} e^{- t \Delta_g} \Phi_p (  u(x) - u(\cdot) ) \frac{\diff t}{t^{1+sp}}
\end{equation*}
where $\Delta_g$ is the Laplace-Beltrami operator on $M$. The change in sign with respect to the other formulas is due to the convention in Geometry, where $\Delta_g$ is the positive operator.

The idea of dealing with this kind of operators has been considered in the case of the fractional versions of the heat equation  and porous medium equations.  The extension method was used by Banica et al. \cite{Banica2015} for the fractional heat equation on hyperbolic space or non compact manifolds. The spectral approach was used quite recently by Berchio et al. \cite{MR4150871} for the fractional porous medium equation posed in hyperbolic space.

\subsection{Representation via a general parametric family of operators}\label{ssec.gen}
Following the philosophy of \Cref{sec:semigrep} and \Cref{sec:Balak} we can think of more general representation formulas for the fractional $p$-Laplacian. Consider a parametric operator family $\{T_t\}_{t\geq0}$ defined via convolution with measure kernels $K(t,\cdot)$, i.e.,
\begin{equation*}
	T_t f(x) = \int_\Rd K(t,x-y) f(y) \diff y\,.
\end{equation*}
At minimum, we need to assume that
\begin{equation*}
	\int_0^\infty K(t, y) \frac{\diff t}{t^{1+\alpha}} = \frac{ C_K}{|y|^{n+sp}}  \quad \text{for all } y \in \Rd.
\end{equation*}
for some finite constant $C_K\ne 0$. Then, we can expect that the following formula holds
\begin{equation}\label{eq:general semigroup splitting formula}
	\fpl u(x) =
	C_T
	\int_0^{+\infty}  T_t \left[  \Phi_p (  u(x) - u(\cdot) ) \right] (x) \frac{\diff t}{t^{1+\alpha}}
\end{equation}
with $C_T=C_1(n,s,p)/C_K$.
These formulas have a particularly nice form if $K$ is self-similar, like for example
\[
K(t,y) =t^{-\beta} F(t^{-\gamma}|y|) \quad \textup{with} \quad \gamma\not=0 \quad \textup{and} \quad \frac{\beta+\alpha}{\gamma} = n+sp,
\]
so that
\[
C_K  \defeq \frac{1}{\gamma} \int_0^\infty F(r) {r^{n+sp-1}}  {\dd r}.
\]
In some settings, our proofs of the semigroup and the Balakrishnan's formulas can be repeated. However, if $K$ changes sign and it is not $L^1$, some estimates are likely more difficult. This can be the case, for example, when working with $T_t = (-\Delta)^k (t - \Delta)^m$.

\begin{remark}
	Note that when we use the heat semigroup, $T_t=e^{t \Delta}$, we have
	\[
	K(t,y)= t^{-\frac{n}{2}}F(t^{-\frac{1}{2}}|y|) \quad \textup{with} \quad F(r)=\frac{1}{(4\pi)^{\frac{n}{2}}} e^{ -\frac{r^2}{4}}.
	\]
	Here, $\gamma=1/2$, and we must take $\alpha=sp/2$. It works for all $s$ and $p$, see \eqref{repr1}.
\end{remark}

\begin{remark}  Another option is to use a fractional heat semigroup, $T_t = e^{-t (-\Delta)^\sigma}$, with $\sigma \in (0,1)$. Then the representation \eqref{eq:general semigroup splitting formula}  is correct on the condition that $ sp<2\sigma$.
	
	Let us sketch  the proof. It is well known, see for instance \cite{BG60, BSV17}, that the fractional semigroup has a kernel $K$  with scaling parameters $\beta=n/2\sigma$ and $\gamma=1/2\sigma$, and a profile $F$ that decays like $F(r)\sim r^{-(n+2\sigma)}$ as $r\to\infty$. It follows that we must take $\alpha= sp/2\sigma$. In view of the decay of $F$, the integral for $C_K$ is convergent only if $sp<2\sigma$, and then the  technique applies. On the contrary, the fat tail of $F$ is incompatible with this representation if $sp\ge 2$ because then we cannot find a $\sigma<1 $ such that $C_K$ is finite.
\end{remark}

\section{Final comments and open problems}\label{sec:commentandproblems}

\noindent{\bf Intuition behind our results: splitting the formula for the fractional $p$-Laplacian.}
An idea behind the representation formula \eqref{eq:semigintro} is the following simple principle: the fractional $p$-Laplacian  (which is both a nonlinear and nonlocal operator) of a function $u:\R^n\to\R$ can be formally seen as the composition of a linear nonlocal operator acting on $n$ variables of a certain function $v:\R^{2n}\to\R$ which is built from $\Phi_p$ and $u$.	
Indeed, let $v(x,y) = \Phi_p(u(y) - u(x))$. Notice that $v(x,x) = 0$. At least formally, for $sp <  2$, and for convenient functions, we have that
\begin{align*}
	\fpl u (x) = C_1 \int_{ \Rd }\frac{v(x,x) - v(x,y)}{|x-y|^{n+sp}} \diff y = \widetilde{C}_1(-\Delta)^{\frac {sp} 2}[v(x, \cdot)] (x).
\end{align*}
This shows a splitting between the linear part $(-\Delta)^{\frac{sp} 2}$ and the nonlinear part $\Phi_p$. For $sp \ge 2$ one should define the operator
\[
\mathcal{L}^{\frac{sp}{2}}[\psi](x)=C_1\PV \int_{|y|>0} \frac{ \psi(x) - \psi(y)}{|x-y|^{n+sp}}\dd y,
\]
which does not coincide with $(-\Delta)^{\frac {sp} 2 }$. Naturally, this very singular integral is only defined for special $\psi$ and $x$. A key part of our arguments are based on showing that, even though the operator $\mathcal{L}^{\frac{sp}{2}}$ is hardly ever defined, the composition works. The scaling properties of the corresponding kernels and integrals in $t$ or $z$ will be crucial.
This splitting idea is also shared by the two other alternative representations \eqref{eq:extintro} and \eqref{eq:Balaintro}.

\smallskip

\noindent{\bf Other representation formulas.}
For the case $p = 2$, other representation formulas are known (see, e.g. \cite{Kwa17}). It seems likely  that some of them  can be adapted for the case $p \ne 2$, but we have chosen not to deal with that issue in this paper.

\smallskip

\noindent{\bf Optimal domains of definition for the new formulas.} After our proofs of equivalence of the operator definitions when applied to functions with a  required regularity, an interesting issue is to extend the domain of definition of the operators, as hinted in Subsection \ref{ssec.8.1} or done in reference \cite{Kwa17} for the linear case, and then to show that the equivalence still holds, at least in some weak or viscosity version. This is a delicate issue that we have decided to postpone. It could be very important for the possible applications
to stationary or evolution problems.

\smallskip

\noindent{\bf Other operators arising from subordination for nonlinear semigroups. } In the paper \cite{CiGr09}, Cipriani and Grillo proposed another type of nonlinear and nonlocal operators defined via subordination applied directly to nonlinear semigroups. More precisely, given a nonlinear semigroup $T_t$, they define (up to a normalisation constant)
\begin{equation}\label{eq:nonlinSub}
	A_s[u](x):= \int_{0}^\infty \left(u(x) - T_t[u](x) \right) \frac{\dd t}{t^{1+s}}.
\end{equation}
After our semigroup representation formula \eqref{eq:semigintro} for the fractional $p$-Laplacian $(-\Delta)_p^s$ one might wonder if $A_s$ coincides with $(-\Delta)_p^s$ for some semigroup associated to a local operator, as is happens for the case $p=2$ (the fractional Laplacian). Here the natural candidate is the semigroup $T_t^q$ associated to the $p$-Laplacian operator $\Delta_q u:=\nabla \left( |\nabla u|^{q-2} \nabla u\right)$ for $q>1$, i.e., $T_t^q[u](x):=w(x,t)$ where
\begin{equation*}
	\begin{dcases}
		\partial_t w(x,t)- \Delta_q w(x,t)=0, & x \in \Rd , t > 0 , \\
		w (x,0) = u(x), & x \in \Rd .
	\end{dcases}
\end{equation*}
We perform now a scaling-based argument which reveals that the operator $A_s^q$ defined by \eqref{eq:nonlinSub} for $T_t^q$ is indeed different than the operator $(-\Delta)_p^s$ for any choice of $p,q\in(1,\infty)\setminus \{2\}$. Clearly, for any $q\not=2$ and  $h>0$ we have
\[
\Delta_q [h u](x)= h^{q-1} \Delta_q [ u](x) \quad \textup{and} \quad \Delta_q [ u(h\cdot)](x)= h^{q} \Delta_q [ u] (hx).
\]
From here, it is standard to get
\[
T_t^q[hu](x)=h T_{h^{p-2}t}[u](x) \quad \textup{and} \quad T_{t}^q[h^{\frac{q}{q-2}}u(h\cdot)](x) = h^{\frac{q}{q-2}} T_{t}^q[u](hx),
\]
so that
\begin{equation}\label{eq:scalingGrillo}
	A_s^q[hu](x) = h^{1+(q-2)s}A_s^q[u](x) \quad \textup{and} \quad A_s^q[u(h\cdot)](x)= h^{sq} A_s^q[u](hx).
\end{equation}	
On the other hand, direct computations show that
\[
(-\Delta)_p^s[hu](x)=h^{p-1} (-\Delta)_p^s[u](x),
\]
and 
\[
(-\Delta)_p^s[u(h\cdot)](x)= h^{sp} (-\Delta)_p^s[u](hx).
\]
If both operators were the same, we must have $p=2+(q-2)s$ and $p=q$ which is clearly impossible for a given $s<1$ unless $p=q=2$.

\smallskip

\noindent{\bf Uniqueness of the extension problem \eqref{eq:extension2} for $sp\geq2$.} For $sp<2$ the uniqueness of energy solutions of \eqref{eq:extension2}  is shown in \cite{Caffarelli2007}. However, for $sp \ge 2$ the diffusion becomes too singular at $y = 0$ to work properly in the energy setting. It is an open problem to show in which sense of solution $E_{s,p}[f]$ is the unique solution of the boundary value problem.

\smallskip

\noindent{\bf Numerical methods for the spectral-type fractional $p$-Laplacian on domains.}  Following the ideas of \cite{Cusimano18}, one can implement numerical discretization of the fractional $p$-Laplacian on bounded domains proposed in \Cref{sec:new spectral}.

\smallskip

\noindent{\bf Regularity and well-posedness.} The extension techniques have been extensively used in the theory of regularity of nonlocal problems involving linear operators, i.e the case $p=2$ of the theory presented in this paper. We refer for example to the seminal work of Athanasopoulos and Caffarelli \cite{AtCa10} in the context of fractional porous medium and Stefan type problems. It has also been used to prove well-posedness and properties of such type of equations (see \cite{dP11, dP12}).

We believe that our theory could also help to develop similar results for equations involving the fractional $p$-Laplacian.

\appendix
	
	\section{Technical lemmas}
	
	\begin{lemma}\label{lem:tech1}
		Let $u\in C_b^2(\R^d)$ and $\Phi_p(t)=|t|^{p-2}t$ for $p\geq2$. Let $K:\R^d\to \R_+$ be such that $K(z)=K(-z)$ and $\int K(z) |z|^p \dd z < +\infty$. Then
		\begin{align*}
		&\left| P.V. \int_{|x-y|>0} K(x-y) \Phi_p (u(x)-u(y)) \dd y \right| \\
		&\qquad \leq (p-1) \|\nabla u\|^{p-2}_{L^\infty(\Rd)} \|D^2 u\|_{L^\infty(\Rd)} \int_{\R^d} K(z) |z|^p \dd z.
		\end{align*}
	\end{lemma}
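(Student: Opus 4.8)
The plan is to exploit the symmetry $K(-z)=K(z)$ to symmetrize the principal value, and then to bound the symmetrized integrand pointwise by combining a second order Taylor expansion of $u$ with the fact that, for $p\ge 2$, the map $\Phi_p$ is of class $C^1$ on all of $\R$ with $\Phi_p'(t)=(p-1)|t|^{p-2}$. First I would change variables $z=x-y$, so that the quantity to be estimated reads $\PV\int_{|z|>0}K(z)\,\Phi_p\big(u(x)-u(x-z)\big)\,\dd z$. Since $\{|z|>\veps\}$ is invariant under $z\mapsto -z$, the substitution $z\mapsto -z$ in half of the integral, together with $K(-z)=K(z)$, yields for every $\veps>0$
\[
\int_{|z|>\veps}K(z)\,\Phi_p\big(u(x)-u(x-z)\big)\,\dd z=\frac12\int_{|z|>\veps}K(z)\Big(\Phi_p\big(u(x)-u(x-z)\big)+\Phi_p\big(u(x)-u(x+z)\big)\Big)\dd z.
\]

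Next I would Taylor expand to second order: writing $\alpha:=u(x)-u(x-z)$ and $\beta:=u(x+z)-u(x)$, one has $\alpha=\nabla u(x)\cdot z+r_-(z)$ and $\beta=\nabla u(x)\cdot z+r_+(z)$ with $|r_\pm(z)|\le\tfrac12\|D^2u\|_{L^\infty(\R^d)}|z|^2$, so in particular $|\alpha-\beta|\le\|D^2u\|_{L^\infty(\R^d)}|z|^2$. Because $\Phi_p$ is odd, $\Phi_p\big(u(x)-u(x+z)\big)=-\Phi_p(\beta)$, hence the bracket in the display above equals $\Phi_p(\alpha)-\Phi_p(\beta)$. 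Using the mean value theorem for $\Phi_p$ (legitimate since $\Phi_p\in C^1(\R)$ when $p\ge 2$) together with the monotonicity of $t\mapsto|t|^{p-2}$, I would estimate $|\Phi_p(\alpha)-\Phi_p(\beta)|\le(p-1)\max\{|\alpha|,|\beta|\}^{p-2}\,|\alpha-\beta|$. Finally, the elementary Lipschitz bound $|u(x)-u(w)|\le\|\nabla u\|_{L^\infty(\R^d)}|x-w|$ gives $|\alpha|,|\beta|\le\|\nabla u\|_{L^\infty(\R^d)}|z|$, so the bracket is pointwise dominated by $(p-1)\|\nabla u\|_{L^\infty(\R^d)}^{p-2}\|D^2u\|_{L^\infty(\R^d)}|z|^p$.

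Since $\int_{\R^d}K(z)|z|^p\,\dd z<+\infty$ by hypothesis, this domination is uniform in $\veps$ and integrable, so letting $\veps\to0$ by dominated convergence shows that the principal value is in fact an absolutely convergent integral and satisfies
\[
\Big|\PV\!\!\int_{|x-y|>0}\!\! K(x-y)\,\Phi_p\big(u(x)-u(y)\big)\,\dd y\Big|\le\tfrac12(p-1)\|\nabla u\|_{L^\infty(\R^d)}^{p-2}\|D^2u\|_{L^\infty(\R^d)}\!\int_{\R^d}\! K(z)|z|^p\,\dd z,
\]
which is even slightly sharper than the claimed inequality. The step I expect to demand the most care is precisely the passage $\veps\to0$ that legitimizes the principal value: this rests on the cancellation, after symmetrization, of the odd leading contribution $\Phi_p(-\nabla u(x)\cdot z)$, which is the desingularization mechanism. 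I would also underline that $p\ge 2$ is genuinely used here, since $\Phi_p$ fails to be $C^1$ at the origin when $1<p<2$; that range is treated separately in \Cref{lem:tech2} and \Cref{lem:tech3}.
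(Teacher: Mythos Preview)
Your argument is correct and follows essentially the same route as the paper's proof: symmetrize via $K(-z)=K(z)$, use the oddness of $\Phi_p$ to turn the sum into a difference $\Phi_p(\alpha)-\Phi_p(\beta)$, apply the mean value theorem for $\Phi_p'(t)=(p-1)|t|^{p-2}$, and bound $|\alpha-\beta|=|2u(x)-u(x+z)-u(x-z)|$ by a second-order Taylor remainder. Your observation that the symmetrization actually yields an extra factor $\tfrac12$ is also implicit in the paper's computation, which simply states the weaker constant in the lemma.
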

	\begin{proof}
		By symmetry
		\begin{align*}
			&\Bigg| \PV \int_{|x-y|>0} K(x-y) \Phi_p (u(x)-u(y)) \dd y \Bigg| \\
			&= \left| \PV \int_{|z|>0} K(z) \Phi_p (u(x)-u(x+z)) \dd z \right|\\
			&=\frac{1}{2}\left| \PV \int_{|z|>0} K(z)\left( \Phi_p (u(x)-u(x+z))+\Phi_p (u(x)-u(x-z))\right) \dd z \right| \\
			&\leq \frac{1}{2}\PV \int_{|z|>0} K(z)\left|  \Phi_p (u(x)-u(x+z))-\Phi_p (-u(x)+u(x-z))\right| \dd z.
		\end{align*}
		Now, since $p\geq2$, we have for some $|\xi| \leq  \sup_{x,z}|u(x)-u(x+z)| \leq \|\nabla u\|_{L^\infty(\Rd)} |z|$
		\begin{align*}
			&\left|  \Phi_p (u(x)-u(x+z))-\Phi_p (-u(x)+u(x-z))\right| \\
			&\qquad  \leq |\Phi_p'(\xi)| |u(x)-u(x+z) - (-u(x)+u(x-z))|\\
			&\qquad=  (p-1) |\xi|^{p-2}|2u(x)-u(x+z) -u(x-z)|\\
			&\qquad\leq (p-1) \|\nabla u\|^{p-2}_{L^\infty(\Rd)} |z|^{p-2} \|D^2 u\|_{L^\infty(\Rd)} |z|^2\\
			&\qquad= (p-1) \|\nabla u\|^{p-2}_{L^\infty(\Rd)} \|D^2 u\|_{L^\infty(\Rd)} |z|^p.
		\end{align*}
		and the proof is done.
	\end{proof}
	
	\begin{lemma}\label{lem:tech3}
		Let $u\in C_b^2(\R^d)$ and $\Phi_p(t)=|t|^{p-2}t$ for $p\in(1,2)$. Let $K:\R^d\to \R_+$ be such that $K(z)=\kappa(|z|)$ and $\int K(z) |z|^{p} \dd z < +\infty$. If $\nabla u(x)\not=0$, then
		\[
		\left| \PV \int_{|x-y|>0} K(x-y) \Phi_p (u(x)-u(y)) \dd y \right| \leq c \int_{\R^d} K(z) |z|^{p} \dd z.
		\]
		for some constant $c$ depending on $p,n$ and $\|u\|_{C^2(\R^n)}$.
	\end{lemma}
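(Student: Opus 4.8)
The plan is to follow the symmetrization device from the proof of \Cref{lem:tech1}, reducing matters to a single integral, and then to confront the genuinely new difficulty for $p\in(1,2)$: the derivative $\Phi_p'(t)=(p-1)|t|^{p-2}$ blows up at $t=0$, so the first-order bound used for $p\ge 2$ is no longer available on the thin slab where $u(x)-u(x\pm z)$ is small. First I would set $\mu:=|\nabla u(x)|>0$, $e:=\nabla u(x)/\mu$, change variables $y=x+z$, and use $K(z)=K(-z)$ to write
\[
	\left| \PV \int_{|z|>0} K(z)\, \Phi_p(u(x)-u(x+z))\, \dd z \right| \le \tfrac12 \int_{\R^d} K(z)\, \big| \Phi_p(a_z)-\Phi_p(b_z)\big|\, \dd z,
\]
with $a_z:=u(x)-u(x+z)$ and $b_z:=u(x-z)-u(x)$. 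A second-order Taylor expansion gives $|a_z-b_z|\le \|D^2u\|_{\infty}|z|^2$ and places both $a_z$ and $b_z$ within $\tfrac12\|D^2u\|_{\infty}|z|^2$ of $t_z:=-\nabla u(x)\cdot z=-\mu(e\cdot z)$; also $|a_z|,|b_z|\le\|\nabla u\|_{\infty}|z|$.

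Next I would split the domain into $\{|z|\ge 1\}$ and $\{|z|<1\}$. On $\{|z|\ge 1\}$ the crude bound $|\Phi_p(a_z)-\Phi_p(b_z)|\le|a_z|^{p-1}+|b_z|^{p-1}\le 2\|\nabla u\|_{\infty}^{p-1}|z|^{p-1}\le 2\|\nabla u\|_{\infty}^{p-1}|z|^p$ already does the job. On $\{|z|<1\}$ I would perform a dyadic decomposition in the angle to the hyperplane $\nabla u(x)^{\perp}$: for $j\ge 0$ put
\[
	S_j:=\big\{ z:\ |z|<1,\ 2^{-j-1}<|e\cdot \hat z|\le 2^{-j}\big\}, \qquad \hat z:=z/|z|,
\]
which covers $\{0<|z|<1\}$ up to the null set $\{e\cdot z=0\}$, and in which $|t_z|=\mu|e\cdot z|\in(\mu 2^{-j-1}|z|,\mu 2^{-j}|z|]$. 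Setting $r_j:=\mu 2^{-j-1}/\|D^2u\|_{\infty}$, on the inner part $S_j\cap\{|z|\le r_j\}$ the Taylor error is at most $\tfrac12|t_z|$, so the whole segment joining $a_z$ and $b_z$ stays at distance $\ge \tfrac12\mu 2^{-j-1}|z|$ from $0$ and, using $|\Phi_p'(s)|=(p-1)|s|^{p-2}$ with $p-2<0$, one gets $|\Phi_p(a_z)-\Phi_p(b_z)|\lesssim 2^{(j+1)(2-p)}|z|^p$; on the outer part $S_j\cap\{r_j<|z|<1\}$ I would instead use the elementary Hölder bound $|\Phi_p(a_z)-\Phi_p(b_z)|\le C_p|a_z-b_z|^{p-1}\le C_p\|D^2u\|_{\infty}^{p-1}|z|^{2p-2}$, and since $|z|>r_j$ and $2p-2=p+(p-2)$ with $p-2<0$ this is again $\lesssim 2^{(j+1)(2-p)}|z|^p$. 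The implicit constants depend on $p$, $d$, $\|u\|_{C^2}$ and on $\mu=|\nabla u(x)|$ (the dependence on a positive lower bound for $|\nabla u(x)|$ is unavoidable, since the statement fails when $\nabla u(x)=0$).

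Finally I would sum over $j$. Writing $\dd z=r^{d-1}\dd r\,\dd\sigma(\omega)$ and using that the spherical measure of $\{\omega\in S^{d-1}:|e\cdot\omega|\in(2^{-j-1},2^{-j}]\}$ is $\le C_d\,2^{-j}$ (in particular it is empty for $j\ge 1$ when $d=1$), one has $\int_{S_j}K(z)|z|^p\,\dd z\le C_d\,2^{-j}\int_{|z|<1}K(z)|z|^p\,\dd z$, so that
\[
	\int_{|z|<1}K(z)\,|\Phi_p(a_z)-\Phi_p(b_z)|\,\dd z \ \lesssim\ \Big(\sum_{j\ge 0} 2^{(j+1)(2-p)}\,2^{-j}\Big)\int_{\R^d}K(z)|z|^p\,\dd z \ =\ C\Big(\sum_{j\ge 0}2^{-j(p-1)}\Big)\int_{\R^d}K(z)|z|^p\,\dd z,
\]
and the geometric series converges exactly because $p>1$. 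Combining this with the estimate on $\{|z|\ge 1\}$ gives the claim. I expect the main obstacle to be precisely this slab near $\nabla u(x)^{\perp}$: one must balance the growth $2^{(j+1)(2-p)}$ coming from the singularity of $\Phi_p'$ against the decay $2^{-j}$ of the angular measure of $S_j$, and the surviving margin $2^{-j(p-1)}$ is exactly what the hypotheses $\nabla u(x)\ne 0$ (which makes the bad set genuinely lower-dimensional) and $p>1$ buy us; the remaining steps are routine bookkeeping together with the standard elementary inequalities for $\Phi_p$.
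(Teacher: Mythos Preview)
Your proof is correct. The route differs from the paper's mainly in packaging: the paper invokes two lemmas from Korvenp\"a\"a--Kuusi--Lindgren~\cite{Korvenpaa2019}, first the pointwise bound
\[
|\Phi_p(a_z)-\Phi_p(b_z)|\ \lesssim\ \big(|\nabla u(x)\cdot z|+\|D^2u\|_\infty|z|^2\big)^{p-2}\|D^2u\|_\infty|z|^2,
\]
and then the spherical estimate $\int_{S^{d-1}}(|e\cdot\omega|+a)^{p-2}\,\dd\sigma(\omega)\lesssim(1+a)^{p-2}$, after which the $r$--integration is immediate. Your argument is a self-contained unfolding of exactly these two ingredients: the inner/outer split on each $S_j$ is the case analysis hidden in the pointwise bound, and the dyadic angular decomposition together with $\sigma(\{|e\cdot\omega|\sim 2^{-j}\})\lesssim 2^{-j}$ is the discrete version of the spherical integral lemma. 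The underlying mechanism---the singularity $|\,\cdot\,|^{p-2}$ is integrable on $S^{d-1}$ against the vanishing $|e\cdot\omega|$ because $p>1$---is identical; you make the balance $2^{j(2-p)}\cdot 2^{-j}=2^{-j(p-1)}$ explicit, which is pedagogically nice. What your approach buys is self-containment (no appeal to \cite{Korvenpaa2019}); what the paper's buys is brevity.

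One observation worth keeping: you rightly note that the final constant depends on $|\nabla u(x)|$ through $\mu^{p-2}$, not only on $\|u\|_{C^2}$. This dependence is also present in the paper's proof (the factor $|\nabla u(x)|^{p-2}$ appears explicitly there), so the lemma as stated is slightly imprecise in this respect; your remark that the dependence is unavoidable is correct.
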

	
	\begin{proof}
		We follow  the proof of Lemma 3.6 in \cite{Korvenpaa2019} to show that
		\begin{align*}
			&\Bigg| \PV \int_{|x-y|>0} K(x-y) \Phi_p (u(x)-u(y)) \dd y \Bigg| \\
			&\lesssim \int_{|z|>0} \left( |\nabla u(x)\cdot z| + \|D^2 u\|_{L^\infty(\R^n)} |z|^2\right)^{p-2} \|D^2 u\|_{L^\infty(\R^n)} |z|^2 K(z) \dd z.
		\end{align*}
		Changing to polar coordinates and using Lemma 3.5 in  \cite{Korvenpaa2019} we get
		\begin{align*}
			&\Bigg| \PV \int_{|x-y|>0} K(x-y) \Phi_p (u(x)-u(y)) \dd y \Bigg| \\
			&\lesssim\|D^2 u\|_{L^\infty(\R^n)} \int_{0}^\infty r^2 \kappa(r) \left( \int_{S^n} \left( |\nabla u(x)\cdot \omega| r  + \|D^2 u\|_{L^\infty(\R^n)} r^2\right)^{p-2} \dd \omega \right)\dd r\\
			&= \|D^2 u\|_{L^\infty(\R^n)} |\nabla u(x)|^{p-2} \\
			&\qquad \times \int_{0}^\infty r^p \kappa(r) \left( \int_{S^n} \left( \left|\frac{\nabla u(x)}{|\nabla u(x)| }\cdot \omega\right |   + \frac{\|D^2 u\|_{L^\infty(\R^n)}}{|\nabla u(x)|} r\right)^{p-2}\dd \omega \right)\dd r\\
			&\lesssim \int_{0}^\infty r^p\kappa(r)  \left(1+ \frac{\|D^2 u\|_{L^\infty(\R^n)}}{|\nabla u(x)|} r\right)^{p-2} dr \\
			&\lesssim \int_{0}^\infty r^p\kappa(r)  dr
		\end{align*}
		where in the last inequality we have used the fact that $f(t)=|t|^{p-2}$ is a decreasing function for $p<2$. Changing back to regular coordinates, we get the desired result.
		
	\end{proof}

	\begin{lemma}\label{lem:tech2}
		Let $u\in C_b^2(\R^d)$ and $\Phi_p(t)=|t|^{p-2}t$ for $p\in(1,2)$. Let $K:\R^d\to \R_+$ be such that $K(z)=\kappa(|z|)$ and $\int K(z) |z|^{2p-2} \dd z < +\infty$. Then
		\[
		\left| \PV \int_{|x-y|>0} K(x-y) \Phi_p (u(x)-u(y)) \dd y \right| \leq c \int_{\R^d} K(z) |z|^{2p-2} \dd z.
		\]
		for some constant $c$ depending on $p,n$ and $\|u\|_{C^2(\R^n)}$.
	\end{lemma}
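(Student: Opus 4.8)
The plan is to mimic the symmetrization used in the proof of \Cref{lem:tech1}, but to replace the pointwise Lipschitz estimate on $\Phi_p$ (which fails near the origin when $p<2$, since $\Phi_p'(t)=(p-1)|t|^{p-2}$ blows up there) by the global $(p-1)$-Hölder continuity of $\Phi_p$. First, after the change of variables $y=x+z$ and using $K(z)=K(-z)$ together with $\Phi_p(-t)=-\Phi_p(t)$, I would write, for every $\varepsilon>0$,
\[
\int_{|z|>\varepsilon} K(z)\,\Phi_p\big(u(x)-u(x+z)\big)\,\dd z
=\frac12\int_{|z|>\varepsilon} K(z)\,\big(\Phi_p(b_z)-\Phi_p(a_z)\big)\,\dd z,
\]
where $a_z:=u(x+z)-u(x)$ and $b_z:=u(x)-u(x-z)$. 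Thus the task reduces to estimating $|\Phi_p(b_z)-\Phi_p(a_z)|$ pointwise in $z$.

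Next I would use that, since $u\in C_b^2$, Taylor's formula with second-order remainder gives $a_z=\nabla u(x)\cdot z+\tfrac12 z^{T}D^2u(\xi_+)z$ and $b_z=\nabla u(x)\cdot z-\tfrac12 z^{T}D^2u(\xi_-)z$ for suitable points $\xi_\pm$ on the segments joining $x$ to $x\pm z$, so that the first-order parts cancel and
\[
|a_z-b_z|\le \|D^2u\|_{L^\infty(\R^n)}\,|z|^2\qquad\text{for all }z\in\R^n.
\]
Combining this with the elementary inequality, valid for all $a,b\in\R$ and $p\in(1,2)$,
\[
\big|\,|a|^{p-2}a-|b|^{p-2}b\,\big|\le c_p\,|a-b|^{p-1},
\]
which expresses that $\Phi_p$ is globally $(p-1)$-Hölder continuous, yields the pointwise bound
\[
\big|\Phi_p(b_z)-\Phi_p(a_z)\big|\le c_p\,\|D^2u\|_{L^\infty(\R^n)}^{p-1}\,|z|^{2(p-1)}.
\]

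Finally, since $K\ge0$ and, by hypothesis, $K(z)|z|^{2p-2}\in L^1(\R^n)$ (which in particular forces $K\in L^1$ near infinity, as $2p-2>0$), the symmetrized integrand is dominated, uniformly in $\varepsilon$, by $c_p\|D^2u\|_{L^\infty(\R^n)}^{p-1}K(z)|z|^{2p-2}$. Hence the principal value exists, by dominated convergence it equals $\tfrac12\int_{\R^n}K(z)(\Phi_p(b_z)-\Phi_p(a_z))\,\dd z$, and its absolute value is at most $\tfrac12 c_p\|D^2u\|_{L^\infty(\R^n)}^{p-1}\int_{\R^n}K(z)|z|^{2p-2}\,\dd z$, which is the claim with $c=\tfrac12 c_p\|D^2u\|_{L^\infty(\R^n)}^{p-1}$. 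In contrast with \Cref{lem:tech3}, no lower bound on $|\nabla u(x)|$ is needed here; the price is the heavier weight $|z|^{2p-2}$, more singular at the origin than $|z|^p$ precisely because $p<2$ (one loses the factor $|z|^{p-2}$ that Lemma~\ref{lem:tech1} extracts from $\Phi_p'$ when $p\ge2$). There is no serious obstacle: the only points requiring a little care are recalling (or proving, by a short one-variable argument) the $(p-1)$-Hölder estimate for $\Phi_p$, and, as in the other technical lemmas, justifying that the symmetrization genuinely removes the principal value.
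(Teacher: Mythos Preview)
Your proof is correct, and it is a genuinely different route from the paper's. The paper does not argue via the global $(p-1)$-H\"older continuity of $\Phi_p$; instead it imports an intermediate estimate from Lemma~3.6 of \cite{Korvenpaa2019},
\[
\left| \PV \int K(x-y)\,\Phi_p(u(x)-u(y))\,\diff y \right|
\lesssim \int \bigl(|\nabla u(x)\cdot z| + \|D^2u\|_{L^\infty}|z|^2\bigr)^{p-2}\,\|D^2u\|_{L^\infty}|z|^2\,K(z)\,\diff z,
\]
and then exploits that $t\mapsto t^{p-2}$ is decreasing for $p<2$ to drop the first-order contribution, leaving $(\|D^2u\|_{L^\infty}|z|^2)^{p-2}\cdot\|D^2u\|_{L^\infty}|z|^2=\|D^2u\|_{L^\infty}^{p-1}|z|^{2p-2}$. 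Your argument reaches the \emph{same} pointwise bound $c_p\|D^2u\|_{L^\infty}^{p-1}|z|^{2p-2}$ directly, replacing the external reference by the elementary inequality $|\Phi_p(a)-\Phi_p(b)|\le c_p|a-b|^{p-1}$ combined with the second-order Taylor cancellation $|a_z-b_z|\le \|D^2u\|_{L^\infty}|z|^2$. This is cleaner and fully self-contained; the paper's route, on the other hand, keeps the sharper intermediate factor $(|\nabla u(x)\cdot z|+\|D^2u\|_{L^\infty}|z|^2)^{p-2}$, which is exactly what feeds into the companion \Cref{lem:tech3} when $\nabla u(x)\neq 0$ to recover the lighter weight $|z|^p$. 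So the trade-off is: your approach is simpler for this lemma in isolation, while the paper's detour through \cite{Korvenpaa2019} unifies \Cref{lem:tech2} and \Cref{lem:tech3} under one intermediate bound.
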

	
	\begin{proof}
		We follow  in the proof of Lema 3.6 \cite{Korvenpaa2019} to show that
		\begin{align*}
			&\Bigg| \PV \int_{|x-y|>0} K(x-y) \Phi_p (u(x)-u(y)) \dd y \Bigg| \\
			&\lesssim \int_{|z|>0} \left( |\nabla u(x)\cdot z| + \|D^2 u\|_{L^\infty(\R^n)} |z|^2\right)^{p-2} \|D^2 u\|_{L^\infty(\R^n)} |z|^2 K(z) \dd z\\
			&\lesssim \|D^2 u\|_{L^\infty(\R^n)}^{p-1}  \int_{|z|>0}  |z|^{2p-2} K(z) \dd z
		\end{align*}
		where in the last inequality we have used the fact that $f(t)=|t|^{p-2}$ is a decreasing function for $p<2$.
		
	\end{proof}
	
	\begin{remark}
		If $K(z)=|z|^{-n-sp}\chi_{|z|\leq1}(z)$ then \Cref{lem:tech2} applies for $p>\frac{2}{2-s}$ since
		\[
		\int K(z) |z|^{2p-2} \dd z\lesssim \int \frac{\dd z}{|z|^{n+sp+2-2p}}
		\]
		that is finite if $sp+2-2p<0$, that is $p>\frac{2}{2-s}$.
	\end{remark}

\section*{Acknowledgements}
F. del Teso was partially supported by PGC2018-094522-B-I00 from the MICINN of the Spanish Government.
The work of D. G\'omez-Castro  and J. L. V\'azquez was funded by  grant PGC2018-098440-B-I00 from  the Spanish Government.
D. G\'omez-Castro was supported by the Advanced Grant Nonlocal-CPD (Nonlocal PDEs for Complex Particle Dynamics: Phase Transitions, Patterns and Synchronization) of the European Research Council Executive Agency (ERC) under the European Union’s Horizon 2020 research and innovation programme (grant agreement No. 883363).
J.~L.~V\'azquez is an Honorary Professor at Univ.\ Complutense de Madrid. We thank Bruno Volzone and the referees for interesting suggestions.

\medskip

 \bigskip

 \smallskip

 \it

 \noindent
$^1$ Dpto.\ de Análisis Matemático y Matemática Aplicada, \\
Universidad Complutense de Madrid. \\
email: \href{mailto:felix.delteso@ucm.es}{fdelteso@ucm.es} \\[4pt]
$^2$ Mathematical Institute, \\
 University of Oxford. \\ email: \href{mailto:david.gomezcastro@maths.ox.ac.uk}{david.gomezcastro@maths.ox.ac.uk} (Corr. author)\\[4pt]
 $^3$ Departamento de Matemáticas, \\ Universidad Autónoma de Madrid.\\ \href{mailto:juanluis.vazquez@uam.es}{juanluis.vazquez@uam.es}\\


\begin{thebibliography}{99} %
 \normalsize %
%
%
%
%
%
%
%
%
%
%
%
%
%
%
%
%
%
%
%
%
%
%
%
%
%
%
%
%
%
%
%
%
%
%
%
%

\bibitem{Andreu2010}
F.~Andreu-Vaillo, J.M. Maz\'{o}n, J.D. Rossi, and
J.J. Toledo-Melero,
\emph{Nonlocal Diffusion Problems,} volume 165 of {\em Mathematical
	Surveys and Monographs}.
\newblock American Mathematical Society, Providence, RI; Real Sociedad
Matem\'{a}tica Espa\~{n}ola; Madrid (2010).

\bibitem{AtCa10}
I.~Athanasopoulos and L.~A. Caffarelli,
\newblock Continuity of the temperature in boundary heat control problems.
\newblock {\em Adv. Math.} {\bf 224}, No 1 (2010), 293--315, .

\bibitem{Balakrishnan1960}
A.~V. Balakrishnan,
\newblock {Fractional powers of closed operators and the semigroups generated
	by them}.
\newblock {\em Pacific Journal of Mathematics} {\bf 10}, No 2 (1960), 419--437.

\bibitem{Banica2015}
V. Banica, M.d.M. Gonz\'{a}lez, and M. S\'{a}ez,
\newblock Some constructions for the fractional {L}aplacian on noncompact
manifolds.
\newblock {\em Rev. Mat. Iberoam.} {\bf 31}, No 2 (2015), 681--712.

\bibitem{barrios2020}
B. Barrios and M. Medina,
\newblock Equivalence of weak and viscosity solutions in fractional
non-homogeneous problems.
\newblock {\em arXiv preprint arXiv:2006.08384}, (2020).

\bibitem{MR4150871}
E. Berchio, M. Bonforte, D. Ganguly, and G. Grillo,
\newblock The fractional porous medium equation on the hyperbolic space.
\newblock {\em Calc. Var. Partial Differential Equations} {\bf 59}, No 5:169  (2020), 36pp.

\bibitem{BCF12b}
C.~Bjorland, L.~Caffarelli, and A.~Figalli,
\newblock Non-local gradient dependent operators.
\newblock {\em Adv. Math.} {\bf 230}, No 4-6. (2012), 1859--1894.

\bibitem{BCF12a}
C.~Bjorland, L.~Caffarelli, and A.~Figalli,
\newblock Nonlocal tug-of-war and the infinity fractional {L}aplacian.
\newblock {\em Comm. Pure Appl. Math.} {\bf 65}, No 3 (2021), 337--380.

\bibitem{BG60}
R.M. Blumenthal and R.K. Getoor,
\newblock Some theorems on stable processes.
\newblock {\em Trans. Amer. Math. Soc.} {\bf 95}, (1960), 263--273.

\bibitem{Bochner1949}
S.~Bochner,
\newblock {Diffusion Equation and Stochastic Processes}.
\newblock {\em Proceedings of the National Academy of Sciences}
{\bf 35},  No 7 (1949), 368--370.

\bibitem{BSV17}
M. Bonforte, Y. Sire, and J.L. V\'{a}zquez,
\newblock Optimal existence and uniqueness theory for the fractional heat
equation.
\newblock {\em Nonlinear Anal.} {\bf 153}, (2017), 142--168.

\bibitem{MR3360740}
M. Bonforte and J.L. V\'{a}zquez,
\newblock A priori estimates for fractional nonlinear degenerate diffusion
equations on bounded domains.
\newblock {\em Arch. Ration. Mech. Anal.} {\bf 218}, No 1 (2015), 317--362.

\bibitem{BoPa15}
A. Bonito and J.E. Pasciak,
\newblock Numerical approximation of fractional powers of elliptic operators.
\newblock {\em Math. Comp.} {\bf 84}, No 295 (2015), 2083--2110.

\bibitem{Brasco17}
L. Brasco and E. Lindgren,
\newblock Higher {S}obolev regularity for the fractional {$p$}-{L}aplace
equation in the superquadratic case.
\newblock {\em Adv. Math.} {\bf 304}, (2017), 300--354.

\bibitem{BrLiSc18}
L. Brasco, E. Lindgren, and A. Schikorra,
\newblock Higher {H}\"{o}lder regularity for the fractional {$p$}-{L}aplacian
in the superquadratic case.
\newblock {\em Adv. Math.} {\bf 338}, (2018), 782--846.

\bibitem{brasco2019}
L. Brasco and A. Salort,
\newblock {A note on homogeneous Sobolev spaces of fractional order}.
\newblock {\em Annali di Matematica Pura ed Applicata} {\bf 198}, No 4 (2019), 1295--1330.


\bibitem{Bucur2020}
C. {Bucur} and M. Squassina,
\newblock {An asymptotic expansion for the fractional p-Laplacian and gradient
	dependent nonlocal operators}.
\newblock {\em arXiv preprint arXiv:2001.09892v3}, (2020).

\bibitem{CaTa10}
X. Cabr\'{e} and J. Tan,
\newblock Positive solutions of nonlinear problems involving the square root of
the {L}aplacian.
\newblock {\em Adv. Math.} {\bf 224}, No 5 (2010), 2052--2093.

\bibitem{Caffarelli2007}
L. Caffarelli and L. Silvestre,
\newblock {An extension problem related to the fractional laplacian}.
\newblock {\em Communications in Partial Differential Equations}
{\bf 32}, No 5 (2017), 1245--1260.

\bibitem{ChaJa17}
E. Chasseigne and E.R. Jakobsen,
\newblock On nonlocal quasilinear equations and their local limits.
\newblock {\em J. Differential Equations} {\bf 262}, No 6 (2017), 3759--3804.

\bibitem{Ci-etal18}
\'{O}. Ciaurri, L. Roncal, P.R. Stinga, J.L. Torrea, and
J.L. Varona,
\newblock Nonlocal discrete diffusion equations and the fractional discrete
{L}aplacian, regularity and applications.
\newblock {\em Adv. Math.} {\bf 330}, (2018), 688--738.

\bibitem{CiGr09}
F. Cipriani and G. Grillo,
\newblock Hypercontractivity, {N}ash inequalities and subordination for classes
of nonlinear semigroups.
\newblock {\em Semigroup Forum} {\bf 78}, No 1 (2009), 77--98.

\bibitem{Cusimano18}
N, Cusimano, F. del Teso, L. Gerardo-Giorda, and G. Pagnini,
\newblock Discretizations of the spectral fractional {L}aplacian on general
domains with {D}irichlet, {N}eumann, and {R}obin boundary conditions.
\newblock {\em SIAM J. Numer. Anal.} {\bf 56}, No 3 (2018), 1243--1272,.

\bibitem{dP11}
A. de~Pablo, F. Quir\'{o}s, A. Rodr\'{\i}guez, and J.L.
V\'{a}zquez,
\newblock A fractional porous medium equation.
\newblock {\em Adv. Math.} {\bf 226}, No 2 2011, 1378--1409.

\bibitem{dP12}
A. de~Pablo, F. Quir\'{o}s, A. Rodr\'{\i}guez, and J.L.
V\'{a}zquez,
\newblock A general fractional porous medium equation.
\newblock {\em Comm. Pure Appl. Math.} {\bf 65}, No 9 (2012), 1242--1284.

\bibitem{Teso14}
F. {del Teso},
\newblock Finite difference method for a fractional porous medium equation.
\newblock {\em Calcolo} {\bf 51}. No 4 (2014), 615--638.

\bibitem{DelTeso2018}
F. {del Teso}, J. Endal, and E.R. Jakobsen,
\newblock {Robust numerical methods for nonlocal (and local) equations of
	porous medium type. Part II: Schemes and experiments}.
\newblock {\em SIAM Journal on Numerical Analysis} {\bf 56}, No 6 (2018), 3611--3647.

\bibitem{DelTeso2019}
F. {del Teso}, J. Endal, and E.R. Jakobsen,
\newblock Robust numerical methods for nonlocal (and local) equations of porous
medium type. {P}art {I}: {T}heory.
\newblock {\em SIAM J. Numer. Anal.} {\bf 57}, No 5 (2019), 2266--2299.

\bibitem{delTesoLindgren2020}
F. {del Teso} and E. Lindgren.
\newblock {A mean value formula for the variational p-{L}aplacian},
\newblock {\em NoDEA Nonlinear Differential Equations Appl.} {\bf 28}, No 3 (2021), Paper No. 27.

\bibitem{DiNezza2012}
E. {Di}~{Nezza}, G. Palatucci, and E. Valdinoci,
\newblock {Hitchhiker's guide to the fractional Sobolev spaces}.
\newblock {\em Bulletin des Sciences Mathematiques} {\bf 136}. No 5 (2021), 521--573.

\bibitem{DiBenedetto93}
E. DiBenedetto,
\newblock {\em Degenerate Parabolic Equations}.
\newblock Universitext. Springer-Verlag, New York (1993).

\bibitem{Garofalo2020}
N. Garofalo and G. Tralli,
\newblock {A Bourgain-Brezis-Mironescu-D\'avila theorem in groups of Heisenberg
	type}.
\newblock {\em arXiv preprint arXiv:2004.08529v3}, (2020).

\bibitem{Guo2018}
L. Guo, B. Zhang, and Y. Zhang,
\newblock {Fractional p-Laplacian equations on Riemannian manifolds}.
\newblock {\em Electron. J. Differ. Equations}  {\bf 156}, (2018), 1--17.

\bibitem{Iannizzotto16}
A. Iannizzotto, S. Mosconi, and M. Squassina.
\newblock Global {H}\"{o}lder regularity for the fractional {$p$}-{L}aplacian,
\newblock {\em Rev. Mat. Iberoam.}  {\bf32} , No 4 (2016),1353--1392.

\bibitem{Ishii10}
H. Ishii and G. Nakamura,
\newblock A class of integral equations and approximation of {$p$}-{L}aplace
equations.
\newblock {\em Calc. Var. Partial Differential Equations}  {\bf 37}, No 3-4 (2010), 485--522.

\bibitem{Korvenpaa2019}
J. Korvenp{\"{a}}{\"{a}}, T. Kuusi, and E. Lindgren,
\newblock {Equivalence of solutions to fractional p-Laplace type equations}.
\newblock {\em J. Math. Pures Appl.} {\bf 132}, (2019), 1--26.

\bibitem{Kuusi15}
T. Kuusi, G. Mingione, and Y. Sire,
\newblock Nonlocal equations with measure data.
\newblock {\em Comm. Math. Phys.} {\bf 337}, No 3 (2015), 1317--1368.

\bibitem{Kwa17}
M. Kwa\'{s}nicki,
\newblock Ten equivalent definitions of the fractional {L}aplace operator.
\newblock {\em Fract. Calc. Appl. Anal.} {\bf 20}, No. 1 (2017), 7--51.

\bibitem{Landkof66}
N.~S. Landkof,
\newblock {\em Foundations of modern potential theory}.
\newblock Springer-Verlag, New York-Heidelberg (1972).
\newblock Translated from the Russian by A. P. Doohovskoy, Die Grundlehren der
mathematischen Wissenschaften, Band 180.

\bibitem{Lindgren16}
E. Lindgren,
\newblock H\"{o}lder estimates for viscosity solutions of equations of
fractional {$p$}-{L}aplace type.
\newblock {\em NoDEA Nonlinear Differential Equations Appl.}, {\bf 23}, No 5 (2016), Paper No. 18.

\bibitem{LindgrenLindq14}
E. Lindgren and P. Lindqvist,
\newblock Fractional eigenvalues.
\newblock {\em Calc. Var. Partial Differential Equations} {\bf 49}, No 1-2 (2014), 795--826.

\bibitem{Lindgren17}
E. Lindgren and P. Lindqvist,
\newblock Perron's method and {W}iener's theorem for a nonlocal equation.
\newblock {\em Potential Anal.}  {\bf 46}, No 4 (2017), 705--737.


\bibitem{Lindqvist06}
P. Lindqvist,
\newblock {\em Notes on the Stationary $p$-Laplace Equation}, in the series {\em
	SpringerBriefs in Mathematics}.
\newblock Springer International Publishing, Cham  (2019).

\bibitem{Manfredi10}
J.J. Manfredi, M. Parviainen, and J.D. Rossi,
\newblock An asymptotic mean value characterization for {$p$}-harmonic
functions.
\newblock {\em Proc. Amer. Math. Soc.} {\bf 138}, No 3 (2010), 881--889.

\bibitem{Manfedi12}
J.J. Manfredi, M. Parviainen, and J.D. Rossi,
\newblock Dynamic programming principle for tug-of-war games with noise.
\newblock {\em ESAIM Control Optim. Calc. Var.} {\bf 18}, No 1 (2012), 81--90.

\bibitem{Martinez2001}
C. Mart\'{\i}nez~Carracedo and M. Sanz~Alix,
\newblock {\em The theory of fractional powers of operators}, volume 187 of
{\em North-Holland Mathematics Studies}.
\newblock North-Holland Publishing Co., Amsterdam (2001).

\bibitem{Mazon2016}
J.M. Maz{\'{o}}n, J.D. Rossi, and J. Toledo,
\newblock {Fractional p-Laplacian evolution equations}.
\newblock {\em J. Math. Pures Appl.} {\bf 105}, No 6 (2016), 810--844.

\bibitem{NoOtSa15}
R.H. Nochetto, E. Ot\'{a}rola, and A.J. Salgado,
\newblock A {PDE} approach to fractional diffusion in general domains: a priori
error analysis.
\newblock {\em Found. Comput. Math.} {\bf 15}, No 3 (2015), 733--791.

\bibitem{Perez11}
M. P\'{e}rez-Llanos and J.D. Rossi,
\newblock Numerical approximations for a nonlocal evolution equation.
\newblock {\em SIAM J. Numer. Anal.} {\bf 9}, No 5 (2011), 2103--2123.

\bibitem{Puhst15}
D. Puhst,
\newblock On the evolutionary fractional {$p$}-{L}aplacian.
\newblock {\em Appl. Math. Res. Express. AMRX}, No 2 (2015), 253--273.

\bibitem{Stein1970}
E.M. Stein,
\newblock {\em Singular integrals and differentiability properties of
	functions}.
\newblock Princeton Mathematical Series, No. 30. Princeton University Press,
Princeton, N.J.  (1970).

\bibitem{Stinga2010}
P.R. Stinga and J.L. Torrea,
\newblock {Extension problem and Harnack's inequality for some fractional
	operators}.
\newblock {\em Communications in Partial Differential Equations},
{\bf 35}, No 11 (2010), 2092--2122.

\bibitem{Strom19}
M. Str\"{o}mqvist,
\newblock Local boundedness of solutions to non-local parabolic equations
modeled on the fractional {$p$}-{L}aplacian.
\newblock {\em J. Differential Equations} {\bf 266}, No 12 (2019), 7948--7979.

\bibitem{Tai64}
M.H. Taibleson,
\newblock On the theory of {L}ipschitz spaces of distributions on {E}uclidean
{$n$}-space. {I}. {P}rincipal properties.
\newblock {\em J. Math. Mech.}, 13:407--479, 1964.

\bibitem{Vazquez2020}
J.L. V\'{a}zquez,
\newblock The evolution fractional {$p$}-{L}aplacian equation in {$\mathbb R^N$}.
{F}undamental solution and asymptotic behaviour.
\newblock {\em Nonlinear Anal.} {\bf199}, (2020), 112034, 32pp.


\bibitem{Vazquez2006}
J.L. V\'{a}zquez,
\newblock {\em Smoothing and Decay Estimates for Nonlinear Diffusion
	equations}, volume~33 of {\em Oxford Lecture Series in Mathematics and its
	Applications}.
\newblock Oxford University Press, Oxford (2006).

\bibitem{Vazquez2016}
J.L. V{\'{a}}zquez.
\newblock {The Dirichlet problem for the fractional $p$-Laplacian evolution
	equation}.
\newblock {\em J. Differ. Equ.} {\bf 260}, No 7 (2016), 6038--6056.

\end{thebibliography}
\end{document}